\newtheorem{Theorem}{Theorem}
\newtheorem{Corollary}{Corollary}
\newtheorem{Definition}{Definition}
\newtheorem{Example}{Example}
\newtheorem{Lemma}{Lemma}
\newtheorem{Proposition}{Proposition}
\newtheorem{Remark}{Remark}
\author{
 Sourav Nayak\footnote{Department of Mathematics, Indian Institute of Technology - Hyderabad, Sangareddy-502285, India
 \newline e-mail: {\tt ma22resch11004@iith.ac.in},
 Orcid: 0009-0003-4330-8283}\ ,
 Dhriti Sundar Patra\footnote{Department of Mathematics, Indian Institute of Technology - Hyderabad, Sangareddy-502285, India
 \newline e-mail: {\tt dhriti@math.iith.ac.in, \tt dhritimath@gmail.com},
 Orcid: 0000-0002-7958-1690}
 \ and
 Vladimir Rovenski \footnote{Department of Mathematics, University of Haifa, Mount Carmel, 3498838 Haifa, Israel
 \newline e-mail: {\tt vrovenski@univ.haifa.ac.il},
 Orcid: 0000-0003-0591-8307.
 ({corresponding author})
 }
 }
\title{On the splitting of weak nearly ${\cal C}$-manifolds}
\begin{document}

\date{}

\maketitle

\begin{abstract}
The interest of mathematicians in metric $f$-manifolds, in particular, almost contact metric manifolds, is motivated by the study of the geometry and dynamics of contact foliations, as well as their applications in physics. Weak metric $f$-manifolds, defined by V. Rovenski and R. Wolak (2022), open a new perspective on classical theory of $f$-manifolds and discover new applications. In this paper, we study manifolds of this type, called  weak nearly ${\cal C}$-manifolds, which generalize almost ${\cal C}$-manifolds. We find conditions under which a $(2n+s)$-dimensional weak nearly ${\cal C}$-manifold becomes locally a Riemannian product, and characterize $(4+s)$-dimensional weak nearly ${\cal C}$-manifolds. The consequences of these theorems present new results for nearly ${\cal C}$-manifolds.

\vskip1.5mm\noindent
\textbf{Keywords}:
weak nearly ${\cal C}$-manifold, weak nearly K\"{a}hler manifold, curvature tensor, totally geodesic foliation, Killing vector field.

\vskip1.5mm
\noindent
\textbf{Mathematics Subject Classifications (2010)} 53C15, 53C25, 53D15
\end{abstract}


\section{Introduction}
\label{sec:00-ns}

Contact geometry has garnered increasing interest of mathematicians due to its significant role in physics, 
e.g. geometrical optics, mechanics, thermodynamics, geometric quantization, 
control theory, general relativity, e.g.,~\cite{IZ1,K-2013}.
 An important subclass of almost contact metric manifolds $(M^{2n+1},{f},\xi,\eta,g)$, called nearly cosymplectic manifolds, is defined in \cite{blair1974} by condition that the symmetric part of $\nabla{f}$ vanishes.
These odd-dimensional counterparts of nearly K\"{a}hler manifolds, see~\cite{G-70},  are used in classifying almost contact metric manifolds.
Any 5-dimensional nearly cosymplectic manifold has an Einstein metric of positive scalar curvature.
For example, the sphere $\mathbb{S}^5$ is endowed with a nearly cosymplectic structure induced by the almost Hermitian structure of~$\mathbb{S}^6$.
In~dimensions greater than 5, nearly cosymplectic manifolds are locally Riemannian products ${\mathbb R}\times\bar M^{2n}$ or $B^5\times\bar M^{2n-4}$, where $\bar M$ is a nearly K\"{a}hler manifold and $B$ is a nearly cosymplectic manifold, see~\cite{NDY-2018}.

\smallskip

The~$f$-structure introduced by K.\,Yano, see \cite{YK-1985}, on a smooth manifold $M^{2n+s}$ serves as a higher-dimensio\-nal analog of {almost complex} ($s=0$) and {almost contact} ($s=1$) structures. This structure is defined by a (1,1)-tensor $f$ of rank $2n$ such that $f^3 + f = 0$.
The tangent bundle $TM$ splits into two complementary subbundles,
$2n$-dimensional $f(TM)$ and $s$-dimensional~$\ker f$.
To~generalize concepts and results from almost contact geometry, 
geometers have studied various broad classes of metric $f$-manifolds (i.e., the distribution $\ker f$ is parallelizable), e.g.,~\cite{CFF-1990,Di-T-2006,CdT-2007}.
 For example, a~metric $f$-manifold is termed an almost ${\cal K}$-manifold if $d \Phi=0$, where $\Phi(X,Y):=g(X,{f} Y)$.
The~distribution $\ker f$ of a ${\cal K}$-manifold is tangent to a $\mathfrak{g}$-foliation \cite{AM-1995} with flat totally geodesic leaves.
An important class of metric $f$-manifolds is given by 
${\cal C}$-mani\-folds, hence $\nabla{f}=0$, see~\cite{b1970}.
Such manifolds are locally products of $\mathbb{R}^s$ and K\"{a}hler manifolds. Nearly ${\cal C}$-manifolds  are defined simi\-larly to nearly K\"{a}hler manifolds, starting from ${\cal C}$-manifolds, i.e., $(\nabla_X f)X=0$, see~\cite{BA-2019}.

\smallskip

\noindent
The question arises: \textit{Under what conditions are nearly $\cal C$-manifolds locally Riemannian~products}?

\smallskip

Recent interest in the $f$-structure among mathematicians is also motivated by the study of dyna\-mics and integration on contact foliations, $\mathfrak{g}$-foliations, and $s$-cosymplectic manifolds, see \cite{AM-1995,Alm-2024,Fin-2024,LSZ-2025}.
Contact foliations generalize to higher dimensions the flow of
the Reeb vector field on contact manifolds, and ${\cal K}$-structures are a particular case of uniform $s$-contact structures.

\smallskip

In \cite{RWo-2}, metric structures on a smooth manifold were introduced that generalize the almost Hermitian structure, the almost contact metric structure, and the metric $f$-structure.
These so-called ``weak metric structures" (the~linear complex structure on the contact distribution is replaced by a nonsingular skew-symmetric tensor) 
allow us to revisit classical theory and discover new applications in the dynamics and integration on contact foliations, differential geometry of Killing vector fields, totally geodesic foliations and Ricci-type solitons, 
see \cite{rov2024}, and mathematical physics, see \cite{rst-144}.
In  \cite{rov-128}, weak nearly cosymplectic manifolds were defined,
and their decomposition was proven under certain conditions, generalizing the classical result of \cite{NDY-2018}.
In ~\cite{rst-61} a special class of weak metric $f$-manifolds, called weak nearly $\cal C$-manifolds, which generalize nearly $\cal C$-manifolds, was studied.

\smallskip

Guided by \cite{rov-128,rst-61}, in the paper we investigate the above question for weak nearly~$\cal C$-manifolds.
We~find conditions \eqref{E-nS-10} and \eqref{E-nS-04c} that are trivially satisfied by almost metric $f$-manifolds and
under which weak nearly ${\cal C}$-manifolds are locally Riemannian products.
In~Section~\ref{sec:01-ns}, following the introductory Section~\ref{sec:00-ns}, we recall necessary results on weak metric $f$-structures. 
In~Section~\ref{sec:04-ns}, we generalize some results of~\cite{NDY-2018,rov-128}.
In~Theorem~\ref{T-4.4}, we characterize $(4+s)$-dimensional weak nearly ${\cal C}$-manifolds.
In Theorem~\ref{Th-4.5}, we prove 
that a $(2n+s)$-dimensional weak nearly ${\cal C}$-manifold is locally the Riemannian product of either $\mathbb R^s$ and a weak nearly K\"{a}hler manifold, or, under certain conditions, a weak nearly K\"{a}hler manifold 
$(\bar M^{\,2n-4}, \bar{f},\bar g)$ with the property $\bar\nabla(\bar{f}^{\,2})=0$ and a weak nearly ${\cal C}$-manifold of dimension~$4+s$.
Section~\ref{sec:02-ns} contains auxiliary lemmas on the differential geometry 
of weak nearly ${\cal C}$-manifolds.
In particular, Lemma~\ref{L-R01} shows that their contact distribution is curvature invariant.
This yields the following.

\begin{Corollary}
For a nearly ${\cal C}$-manifold 
the contact distribution is curvature invariant:
\begin{equation}\label{E-nS-04cc}
 R_{X,Y}Z\in{\cal D}\quad (X,Y,Z\in{\cal D}).
\end{equation}
\end{Corollary}

The~following consequences of Theorems~\ref{T-4.4} and \ref{Th-4.5} present new results for nearly~${\cal C}$-manifolds.

\begin{Corollary}
Let a nearly $\cal C$-manifold $M^{\,2n+s}({f}, \vec\xi, \vec\eta, g)$ satisfy \eqref{E-30b-xi} and \eqref{E-30-xi}. 
If  the coframe 
$\vec\eta$ is uniform, then $n=2$ and $M$ admits an $f$-K-contact structure $(\hat f, \vec\xi, \vec\eta,g)$ with $\hat f=-\nabla\xi_i$.
\end{Corollary}

\begin{Corollary}
Let a nearly ${\cal C}$- (non-${\cal C}$-) manifold 
$(M^{\,2n+s}, {f}, \vec\xi, \vec\eta, g)$
of $n>2$ satisfy \eqref{E-30b-xi} and \eqref{E-30-xi}. 
If~the coframe 
$\vec\eta$ is uniform, then $M$ is locally isometric (the isometry is global if $M$ is complete simply connected) to one of the Riemannian products 
\[
 \mathbb{R}^s\times \bar M^{\,2n},\qquad
 B^{\,4+s} \times\bar M^{\,2n-4},
\] 
the induced structure on $\bar M$ is nearly K\"{a}hler,
and the induced structure on $B$ is a nearly ${\cal C}$- 
structure.
\end{Corollary}

\section{Basic concepts of weak metric $f$-manifolds and examples}
\label{sec:01-ns}

In this section, we will review some well-known concepts and results, see \cite{RWo-2,rov2024,rov-128,rst-43,rst-61}.

Recall that nearly K\"{a}hler manifolds $(M,J,g)$ are defined by A.~Gray \cite{G-70} using the condition that only the symmetric part of $\nabla J$ vanishes, in contrast to the K\"{a}hler case where $\nabla J=0$.

\begin{Definition}
\rm
A smooth manifold $M^{2n}$ of even dimension equipped with a Riemannian metric $g$ and a skew-symmetric (1,1)-tensor ${f}$ of rank $2n$ 
is called a \textit{weak almost Hermitian manifold}.
Such $(M, {f}, g)$ is a \textit{weak K\"{a}hler manifold} if $\nabla{f}=0$, where $\nabla$ is the Levi-Civita connection.
A~weak almost Hermitian manifold is called a \textit{weak nearly K\"{a}hler manifold}, if
$(\nabla_X{f})X=0$ for all $X\in\mathfrak{X}_M$. 
\end{Definition}

\begin{Definition}
\rm
A~\textit{weak metric $f$-structure} on a smooth manifold $M^{2n+s}$ $(n,s>0)$ is a set $({f},Q,\vec\xi,\vec\eta$, $g)$, where $f$ is a skew-symmetric $(1,1)$-tensor of rank $2n$, $Q$ is a self-adjoint nonsingular $(1,1)$-tensor,
$\vec\xi=(\xi_1,\ldots,\xi_s)$ are orthonormal vector fields, 
$\vec\eta=(\eta^1,\ldots,\eta^s)$ are dual 1-forms,
and $g$ is a Riemannian metric on $M$, satisfying
\begin{align}\label{2.1}
 &{f}^2 = -Q + \sum\nolimits_{i}{\eta^i}\otimes {\xi_i},\quad {\eta^i}({\xi_j})=\delta^i_j,\quad
 Q\,{\xi_i} = {\xi_i}, \\
\label{2.2}
 &g({f} X,{f} Y)= g(X,Q\,Y) -\sum\nolimits_{i}{\eta^i}(X)\,{\eta^i}(Y)\quad (X,Y\in\mathfrak{X}_M).
\end{align}
In this case, $(M^{2n+s}, {f},Q,\vec\xi,\vec\eta,g)$ is called a \textit{weak metric $f$-manifold}.
\end{Definition}

Putting $Y=\xi_j$ in \eqref{2.2}, and using ${\eta^i}({\xi_j})=\delta^i_j$, we get
\begin{align}\label{2.2-eta}
 \eta^j(X) = g(X,\xi_j);
\end{align}
thus, ${\xi_j}$ is orthogonal to the \textit{contact distribution} ${\cal D}:=\bigcap_{\,i=1}^s \ker{\eta^i}$.
For a more intuitive under\-standing of the role of $Q$ in the metric $f$-structure, we explain the following properties:
\[
 {f}\,{\xi_i}=0,\quad {\eta^i}\circ{f}=0,\quad \eta^i\circ Q=\eta^i,\quad [Q,\,{f}]=0 .
\]
By \eqref{2.1}, $f^2\xi_i=0$ is true. From this and \eqref{2.1}, we get
 $f^3 + fQ = 0$.
By this, $Q\xi_i=\xi_i$ and $f^2\xi_i=0$ we get $0=-f^3\xi_i=fQ\xi_i=f\xi_i$.
By $f\xi_i=0$, \eqref{2.2-eta}, and the skew-symmetry of $f$, we get $\eta^i(fX)=g(fX,\xi_i)=-g(X,f\xi_i)=0$.
From this and condition ${\rm rank}\,f=2n$, we conclude that $f$ the distribution ${\cal D}$
of a weak metric $f$-structure is ${f}$-invariant, ${\cal D}=f(TM)$ and $\dim{\cal D}=2n$.
By this and $f^3 + fQ = 0$, we get $f^3X=f^2(fX)=-QfX$; hence, $f^3 + Qf = 0$.
This and $f^3 + fQ = 0$ yield $fQ=Qf$. By~symmetry of $Q$ and $Q\xi_i=\xi_i$, we get $\eta^i(QX)=g(QX,\xi_i)=g(X,Q\xi_i)=g(X,\xi_i)=\eta^i(X)$.
Therefore, the tangent bundle splits,
$TM = {\cal D}\oplus\ker f$
as the complementary orthogonal sum of its subbundles,
the contact distribution ${\cal D}$, and
$\ker f=\operatorname{span}\{\xi_1,\dots,\xi_s\}$ called the \textit{Reeb distribution}.

The {fundamental $2$-form} $\Phi$ on 
$(M^{2n+s}, {f},Q,\vec\xi,\vec\eta,g)$ is given by
\[
 \Phi(X,Y)=g(X,{f} Y)\quad
 (X,Y\in\mathfrak{X}_M).
\]
Recall the co-boundary formulas for exterior derivative $d$ on a 1-form ${\omega}$ and a $2$-form $\Phi$,
\begin{align*}
 d\Phi(X,Y,Z) &=  X\,\Phi(Y,Z) + Y\,\Phi(Z,X) + Z\,\Phi(X,Y) \notag\\
 &\ -\Phi([X,Y],Z) - \Phi([Z,X],Y) - \Phi([Y,Z],X),\\
 d\omega(X,Y) &= X({\omega}(Y)) - Y({\omega}(X)) - {\omega}([X,Y])\quad (X,Y\in\mathfrak{X}_M).
\end{align*}
Note that the above equality for $d\Phi$ yields
\begin{align}\label{E-3.3}
 d\Phi(X,Y,Z) = (\nabla_X\,\Phi)(Y,Z)+(\nabla_Y\,\Phi)(Z,X)+(\nabla_Z\,\Phi)(X,Y).
\end{align}

\begin{Definition}\rm
A weak metric $f$-structure is called a \textit{weak $f$-{\rm K}-contact structure} if 
 $\Phi=d{\eta^1}=\ldots =d{\eta^s}$ 
is valid and all vector fields ${\xi_i}$ are Killing,~i.e.,
 $(\pounds_{{\xi_i}}\,g)(X,Y)
 =g(\nabla_X {\xi_i}, Y) +g(\nabla_Y {\xi_i}, X)=0$.
Another important case of a weak metric $f$-structure is 
a weak almost ${\cal K}$-structure, that is $d\Phi=0$.
Its special case, is a {weak almost ${\cal C}$-structure}, that is $\Phi$ and $\eta^i\ (1\le i\le s)$ are closed~forms.
\end{Definition}

For~$s=1$, weak almost ${\cal C}$-manifolds become weak almost cosymplectic manifolds.

For weak almost ${\cal K}$-manifolds (and their subclass, 
weak almost ${\cal C}$-manifolds), the Reeb distribution $\ker f$ is tangent to a foliation.
Moreover,  weak almost ${\cal C}$-manifolds satisfy the following conditions:
\begin{align}
\label{E-30b-xi}
 [\xi_i, \xi_j] & =0\quad (1\le i,j \le s), \\
\label{E-30-xi}
 g(\nabla_{X}\,\xi_i,\ \xi_j) & = 0\quad 
 (X\in\mathfrak{X}_M,\ 1\le i,j \le s),
\end{align}
trivial for~$s=1$.
The~following condition is a consequence of \eqref{E-30-xi}:
\begin{align}\label{E-30c-xi}
 \eta^k(\nabla_{\xi_i}\,\xi_j)=0\quad(1\le i,j,k \le s) .
\end{align}

\begin{Remark}\rm
By~\eqref{E-30b-xi}, the 
distribution $\ker f$ of a weak almost ${\cal C}$-manifold is tangent to a $\mathfrak{g}$-foliation with an abelian Lie~algebra. 
A foliation of dimen\-sion $s$ on a smooth connected manifold $M$ is called a 
$\mathfrak{g}$-\textit{foliation} \cite{AM-1995},
where $\mathfrak{g}$ is a Lie algebra of dimension $s$, if there exist complete vector fields $\xi_1,\ldots,\xi_s$ on $M$ which, when restricted to each leaf, form a parallelism of this submanifold with a Lie algebra isomorphic to~$\mathfrak{g}$.
\end{Remark}

A metric $f$-structure, satisfying \eqref{E-30b-xi} and $\nabla f=0$, is a normal weak almost ${\cal C}$-structure, see \cite[Theorem~5]{rst-43}. 
We~consider the case when only the symmetric part of $\nabla f$ vanishes. 

\begin{Definition}
\rm
 A weak metric $f$-structure is called a \textit{weak nearly ${\cal C}$-structure} if
\begin{equation}\label{E-nS-01b}
 (\nabla_X{f})Y + (\nabla_Y{f})X = 0.
\end{equation}
\end{Definition}

Taking derivative of $g({f} V,Z)=-g(V,{f} Z)$, we see that $\nabla_{Y}{f}$ of a weak nearly ${\cal C}$-manifold is skew-symmetric:
 $g((\nabla_{Y}{f}) V, Z)=-g((\nabla_{Y}{f}) Z, V)$.
Taking the derivative of this, we see that $\nabla^2_{X,Y}{f}$ is also skew-symmetric:
 $g((\nabla^2_{X,Y}{f})V, Z)=-g((\nabla^2_{X,Y}{f})Z, V)$,
where 
$\nabla^2_{X,Y} := \nabla_{X}\nabla_{Y} - \nabla_{\nabla_{X}Y}$.

\begin{Example}
\rm
(i) Any weak K\"{a}hler manifold is obviously weak nearly K\"{a}hler. 
Take two or more (nearly) K\"{a}hler manifolds $(M_j,g_j, J_j)$, where $J_j^2=-{\rm Id}_{\,j}$.
The~pro\-duct $\prod_{j}(M_j,g_j,\sqrt{\lambda_j}J_j)$, where $\lambda_j\ne1$ are different positive constants, is a weak 
(nearly) K\"{a}hler manifold with 
$Q=\bigoplus_{j}\lambda_j{\rm Id}_{j}$.
The~classification of weak nearly K\"{a}hler manifolds in dimensions $\ge 4$ is an open problem.

(ii) To construct a weak nearly $\cal C$-structure
$({f}, Q,\vec\xi,\vec\eta,g)$
on
the Riemannian product $M=\bar M\times\mathbb{R}^s$
of a weak nearly K\"{a}hler manifold $(\bar M, \bar{f}, \bar g)$
with $\Omega(X,Y)=\bar g(X, \bar{f}Y)$ and a Euclidean space $(\mathbb{R}^s, dy^2)$, we take any point $(x, y)$ of $M$ and set
\begin{align*}
 \xi_i = (0, \partial_{\,y^i}),\ \
 \eta^i =(0, dy^i),\ \
 {f}(X, \partial_{\,y^i}) = (\bar{f} X, 0),\ \
 Q(X, \partial_{\,y^i}) = (-\bar{f}^{\,2} X, \partial_{\,y^i}),
\end{align*}
where $X\in T_x\bar M$. Note that $\nabla f=0$ if and only if $\overline\nabla\bar f=0$.
On the other hand, $\overline\nabla\bar f=0$ if and only if $d\Omega=0$,
see \eqref{E-3.3} with $\Phi=\Omega$,
i.e., $(\bar M,\Omega)$ is a symplectic manifold.

(iii) 
Let a Riemannian manifold \((M^{2n+s}, g)\) admit two nearly ${\mathcal C}$-structures with common 
vector fields \(\xi_i\) and one-forms \(\eta^i = g(\xi_i, \cdot)\) for $1 \le i \le s$. Let \(f_1 \ne f_2\) be such that the self-adjoint tensor
$\psi := f_1 f_2 + f_2 f_1$ is not identically zero.
Then the tensor $f := (\cos t)\,f_1 + (\sin t)\,f_2$
for small constant \(t > 0\) 
satisfies the condition~\eqref{E-nS-01b}. 
Observe that $f^2=-Q
+ \sum\nolimits_{i} \eta^i \otimes \xi_i$ with
$Q= {\rm Id} - (\sin t \cos t)\,\psi$, which is positive definite for small $t>0$. 
Thus, \((f, Q, \vec\xi, \vec\eta, g)\) is a weak nearly $\cal C$-structure on \(M^{2n+s}\).
\end{Example}

Let $^\top$ denote the ${\cal D}$ component of a vector.
The following condition is trivial when $Q={\rm Id}$:
\begin{align} \label{E-nS-10}
 ( (\nabla_X Q)Y )^\top=0, \quad (\nabla_Y Q)Y =0\quad 
 (X \in TM,\ Y \in{\cal D}). 
\end{align}
Using the first equality of \eqref{E-nS-10}, for any $X,Y \in TM$, we have
\begin{align}\label{E-nS-10b}
\notag
 (\nabla_X Q)Y &= [(\nabla_X Q)Y]^\top + \sum\nolimits_i\eta^i((\nabla_X Q)Y)\, \xi_i \notag \\
 & = -\sum\nolimits_i\{\eta^i(Y)\widetilde Q\nabla_X \xi_i 
 + g(\widetilde Q\nabla_X \xi_i,Y) \,\xi_i\}.
\end{align}

\begin{Remark}\rm 
In \cite{rov2024}, instead of the overly strict condition (4), the 
condition \eqref{E-nS-10} should be used.
\end{Remark}

\begin{Lemma}
\label{L-nS-01}
 On a weak nearly ${\cal C}$-manifold satisfying \eqref{E-30b-xi} and \eqref{E-30c-xi},
the Reeb distribution $\ker f$ defines a flat totally geodesic foliation;
moreover, if 
\eqref{E-30-xi} and \eqref{E-nS-10} hold, then the vector fields $\xi_i$ are~Killing.
\end{Lemma}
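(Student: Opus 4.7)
The plan is to handle the two parts of the lemma separately.

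For the first part, integrability of $\ker f$ is immediate from $[\xi_i,\xi_j]=0$, so I focus on total geodesy and flatness. Substituting $X=\xi_i,\,Y=\xi_j$ into \eqref{E-nS-01b} and using $f\xi_k=0$ collapses the equation to $f(\nabla_{\xi_i}\xi_j+\nabla_{\xi_j}\xi_i)=0$; torsion-freeness combined with $[\xi_i,\xi_j]=0$ forces $\nabla_{\xi_i}\xi_j=\nabla_{\xi_j}\xi_i$, whence $\nabla_{\xi_i}\xi_j\in\ker f$. Next, \eqref{E-30c-xi} forces every $\eta^k$-component of $\nabla_{\xi_i}\xi_j$ to vanish, so actually $\nabla_{\xi_i}\xi_j=0$, and leafwise flatness $R(\xi_i,\xi_j)\xi_k=0$ is then immediate.

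For the Killing claim, set $A_X:=\nabla_X\xi_i$. I would build the proof on three preparatory identities. (a) Applying \eqref{E-nS-01b} to $(X,\xi_i)$ gives $(\nabla_{\xi_i}f)X=fA_X$, and the skew-symmetry of $\nabla_{\xi_i}f$ then yields the basic relation
\[
g(A_X,fY)+g(A_Y,fX)=0.
\]
(b) From \eqref{E-nS-10} together with \eqref{E-nS-10b}, pairing $(\nabla_XQ)Z=0$ (for $Z\in{\cal D}$) against $\xi_i$ and using \eqref{E-30-xi} produces $\widetilde Q A_X=0$, i.e.\ $QA_X=A_X$; combined with $A_{\xi_j}=0$ (from part one), this upgrades to $\nabla Q=0$ globally. (c) Differentiating the structural equation $f^2=-Q+\sum_k\eta^k\otimes\xi_k$ along $\xi_i$ and using $\nabla_{\xi_i}Q=0$, $\nabla_{\xi_i}\xi_k=0$, $\nabla_{\xi_i}\eta^k=0$ yields $\{(\nabla_{\xi_i}f),f\}=0$; inserting $(\nabla_{\xi_i}f)X=fA_X$ and using that $A$ is ${\cal D}$-valued (by \eqref{E-30-xi}) delivers the anti-commutation $A_{fY}=-fA_Y$ for $Y\in{\cal D}$.

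The conclusion is then an algebraic collapse. Substituting $Y\to fZ$ (with $Z\in{\cal D}$) into (a), replacing $f^2Z$ by $-QZ$, $A_{fZ}$ by $-fA_Z$, and using the compatibility $g(fU,fV)=g(U,QV)$ (for $U,V\in{\cal D}$) together with $QA=A$, everything simplifies to $g(A_X,Z)+g(A_Z,X)=0$ for $X,Z\in{\cal D}$. The remaining components $(\pounds_{\xi_i}g)(X,\xi_j)$ vanish by \eqref{E-30-xi} and $A_{\xi_j}=0$, so $\pounds_{\xi_i}g=0$. The main obstacle I anticipate is step (c): deriving $\nabla_{\xi_i}Q=0$ and the resulting anti-commutation $\{(\nabla_{\xi_i}f),f\}=0$ crucially weds \eqref{E-nS-10} to the conclusion $\nabla_{\xi_i}\xi_j=0$ of part one, and without it the final collapse to skew-symmetry of $A$ does not go through.
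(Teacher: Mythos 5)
Your proof is correct. Note that the paper itself states Lemma~\ref{L-nS-01} without proof (it is imported from the reference on weak nearly $\mathcal S$- and $\mathcal C$-manifolds), so there is no in-text argument to compare against; judged on its own, your argument is a valid, self-contained derivation using only the structure identities of Section~\ref{sec:01-ns} and the hypotheses listed. Part one ($f(\nabla_{\xi_i}\xi_j)=0$ from \eqref{E-nS-01b}, then killing the $\ker f$-component via \eqref{E-30c-xi}) is exactly what one expects. In part two, your steps (a)--(c) independently establish facts the paper only records \emph{after} Lemma~\ref{L-nS-01}: the anticommutation $h_if=-fh_i$ (the paper's \eqref{E-nS-01bb}, which the paper derives in Lemma~\ref{L-nS-02} using the skew-symmetry of $h_i$, i.e.\ using the Killing property you are proving — so your independent route via $\nabla_{\xi_i}Q=0$ avoids a circularity you could not have fallen into anyway, which is good). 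Your step (b) deduces the stronger identity $\widetilde Q\,h_i=0$ (hence $Qh_i=h_i$) from \eqref{E-nS-10}, \eqref{E-30-xi} and the self-adjointness of $\nabla_XQ$; this is stronger than the commutation \eqref{E-nS-01d} recorded in the paper, but it is a genuine consequence and is consistent with the paper's remark that \eqref{E-nS-10} fails for the genuinely weak ($\lambda\ne1$) structures of Example~\ref{Ex-s1}. The final substitution $Y\mapsto fZ$ in your identity (a), combined with $f^2Z=-QZ$, $A_{fZ}=-fA_Z$, \eqref{2.2} and $QA=A$, does collapse to $g(A_X,Z)+g(A_Z,X)=0$, and the remaining $\xi_j$-components vanish by \eqref{E-30-xi} and $\nabla_{\xi_j}\xi_i=0$. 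No gaps.
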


\begin{proof}
The first part easily follows from  \cite[Proposition 2]{rst-61}. For the second part, following the proof of \cite[Proposition 2]{rst-61}, we have 
$(\mathcal{L}_{\xi_j}\,g)(\xi_k,\cdot)=0$ and 
\[
(\mathcal{L}_{\xi_j}\,g)(X,Y)=\eta^j\big((\nabla_X Q)Y
+(\nabla_Y Q)X\big)\qquad (X,Y\in{\cal D}).
\]
By this and the second equality of \eqref{E-nS-10}, 
we obtain $(\mathcal{L}_{\xi_j}\,g)(X,Y)=0$ for $X,Y\in{\cal D}$.
Combining this with $(\mathcal{L}_{\xi_j}\,g)(\xi_k,\cdot)=0$, we conclude that $\xi_j$ is Killing.
\end{proof}

For a Riemannian manifold $(M,g)$ equipped with a~Killing vector field ${\zeta}$, we get, see \cite{YK-1985},
\begin{equation}\label{E-nS-04}
 \nabla_X\nabla_Y\,{\zeta} - \nabla_{\nabla_X Y}\,{\zeta} = R_{\,X,\,{\zeta}}\,Y ,
\end{equation}
where $R_{{X},{Y}}=[\nabla_X, \nabla_Y] -\nabla_{[X,Y]}$ is the curvature tensor. We have $g(R_{\,\xi, Z}\,{f} X, {f} Y) = 0$
for a nearly cosymplectic manifold, see \cite{E-2005}; thus its 
distribution ${\cal D}=\ker\eta$ is {curvature invariant},
see~\eqref{E-nS-04cc}.
%
In our study of weak nearly ${\cal C}$-manifolds, we assume a condition weaker than \eqref{E-nS-04cc}: 
\begin{equation}\label{E-nS-04c}
 R_{\widetilde Q X,Y}Z\in{\cal D}\quad (X,Y,Z\in{\cal D}).
\end{equation}
Obviously, conditions \eqref{E-nS-10} and~\eqref{E-nS-04c}  become trivial for metric $f$-manifolds, i.e., when $Q={\rm Id}$.

\section{Main results}
\label{sec:04-ns}

Here, we prove a splitting result (Theorem~\ref{T-4.4}) for weak nearly ${\cal C}$-manifolds satisfying some additional conditions such as \eqref{E-nS-10} and~\eqref{E-nS-04c}.
Theorem~\ref{Th-4.5} characterizes $(4+s)$-dimensional weak nearly~${\cal C}$-manifolds.

We define the tensor field ${\bf h}=(h_1,\ldots,h_s)$, 
as in the classical case, e.g., \cite{E-2005}, where 
\begin{equation}\label{E-c-01}
  h_i = \nabla\xi_i \quad (1\le i \le s).
\end{equation}
As a consequence of Lemma~\ref{L-nS-01}, the tensor fields $h_i$ are skew-symmetric: 
\[
 g(h_i X,\, X) = g(\nabla_X\,\xi_i, X) = \tfrac12\,(\pounds_{\xi_i}\,g)(X,X) = 0\quad(1 \le i \le s).
\] 
Using \eqref{E-30-xi}, we get 
 $\eta^j\circ h_i = 0$
 and
 $h_i\,\xi_j = 0$
 for $1 \le i,j\le s$. This shows that $\operatorname{dim (ker} h_i) \ge s$  for all $i$.


The following result generalizes \cite[Proposition~4.1]{rov-128}
(and \cite[Proposition~4.2]{NDY-2018}) to the case $s>1$.

\begin{Proposition}\label{Prop-4.1}
For a weak nearly ${\cal C}$-manifold with conditions \eqref{E-30b-xi}, \eqref{E-30-xi}, \eqref{E-nS-10} and \eqref{E-nS-04c}, 
the operators $h_i$ and $h_j$ commute and each composition $h_ih_j$ is self-adjoint for $1 \le i,j \le s$.
Moreover, the eigenvalues and their multiplicities of $h_i h_j$ and $h_i^2$ are constant, and  $h_i h_j$ and $h_i^2$ share the same eigen-frame.
\end{Proposition}

\begin{proof}
From the first Bianchi identity, using \eqref{E-nS-05ccc}, we have $R_{\xi_i, X}\,\xi_j=R_{\xi_j, X}\, \xi_i$ for all $X \in \mathfrak{X}_M$ and $1 \le i,j \le s$. From this, applying \eqref{E-3.24} and \eqref{E-3.23}, we get 
the commutativity of $h_i$ and $h_j$:
\[
 0=  R_{\xi_i, X}\,\xi_j-R_{\xi_j, X}\, \xi_i =-(\nabla_X h_i)\xi_j+ (\nabla_X h_j)\xi_i = h_ih_jX-h_jh_iX.
\]
The first part follows 
from the commutativity and skew-symmetry of the operators $h_i\ (1 \le i \le s)$.

From the above 
and Lemma~\ref{L-nS-04} we obtain
\begin{equation}\label{E-nS-11}
 (\nabla_X\, h_i h_j)Y
 = h_i(\nabla_X\, h_j)Y + (\nabla_X\, h_i) h_j Y
 = \sum\nolimits_{k}\big[g(X, h_k h_i h_j Y)\,\xi_k 
 - \eta^j(Y)\,h_i h_j h_k X\big].
\end{equation}
Consider an eigenvalue $\mu$ of $h_i h_j$ and a local unit vector field $Y\in\mathcal{D}$ such that $h_i h_j Y = \mu Y$.
Applying \eqref{E-nS-11} for any nonzero vector fields 
$X,Y\in \mathcal{D}$, we find $g((\nabla_X\, h_i h_j)Y, Y)=0$, thus
\begin{eqnarray*}
 && 0 = g((\nabla_X\, h_i h_j)Y, Y) 
 = g(\nabla_X\,(h_i h_j Y), Y)-g(h_i h_j(\nabla_X\,Y),Y) \\
 &&\ \ = X(\mu)\,g(Y, Y) + \mu\,g(\nabla_X\,Y, Y) 
 - g(\nabla_X\,Y, h_i h_j Y) = X(\mu)\,g(Y, Y),
\end{eqnarray*}
which implies that $X(\mu) = 0$ for all $X\in\mathfrak{X}_M$.
When $j=i$, we get a similar result for $h_i^2$.

Next, by the above,
the operators $h_i$ and $h_j$ commute for all $i,j$, which implies that $h_i^2$ and $h_j^2$ also commute. Each $h_i^2$ is self-adjoint, and therefore, diagonalizable.

It is well known that 
a finite family of
diagonalizable operators on a finite-dimensional Euclidean space is simultaneously diagonalizable if and only if the opeators commute. 
By the above, there exists an orthonormal basis $\{e_{k}\}$ of 
$TM$ such that $h_i^2 e_{k} = \lambda_{i,k}\, e_{k}$ for all $1 \le i \le s$ and~$k$.
\end{proof}

\smallskip

By Proposition~\ref{Prop-4.1}, the spectrum of the self-adjoint operator $h_i^2$ has the~form
\begin{equation}\label{E-nS-11b}
 Spec(h_i^2) = \{0, -\lambda_{i,1}^2,\ldots 
 -\lambda_{i,r_i}^2\} \qquad (1 \le i \le s),
\end{equation}
where $r_i\ge1$, $\lambda_{i,j}$ is a positive real number and $\lambda_{i,j}\ne \lambda_{i,k}$ for $j\ne k$.
If $X\ne0$ is an eigenvector of $h_i^2$ with eigenvalue $-\lambda^2_{i,j}$, then $X, {f} X, h_iX$ and $h_i\,{f} X$ are orthogonal nonzero eigenvectors of $h_i^2$ with the same eigenvalue $-\lambda^2_{i,j}$.
Since $h_i(\xi_j)=0$ for all $1 \leq i,j \le s$, the Reeb  distribution $\ker f=\operatorname{span}\{\xi_1,\dots,\xi_s\}$ is contained in the kernel of $h_i$. Consequently, the eigenvalue $0$ of $h_i^2$ has multiplicity $2p_i+s$ for some integer $p_i\geq 0$, associated with each operator $h_i^2$. 
Denote by 

$\bullet$ $D_{i,0}$ the smooth distribution of the eigenvectors of $h_i^2$ with eigenvalue $0$ orthogonal to $\ker f$;

$\bullet$ $D_{i,j}\ (1\le j\le r_i)$ be the smooth distribution of the eigenvectors of $h_i^2$ with eigenvalue $-\lambda^2_{i,j}$.

\noindent
Note that 
$D_{i,0}$ and $D_{i,j}$ belong to ${\cal D}$ and are ${f}$-
and $h_k$-invariant for each $1 \le i,k \le s$. 
\begin{Remark}\rm
 By the skew-symmetry of $h_i$, for every $X,Y \in \mathfrak{X}_M$, we have
\begin{align}\label{E-c-01b}
 d\eta^i(X,Y) 
   = 2\,g(h_iX,Y) \quad (1 \le i \le s).
\end{align}
The above shows that $\operatorname{ker} h_i = \operatorname{ker}d\eta^i$ and as a consequence of this, $\operatorname{dim\, (ker} d\eta^i) \ge s $ for all $i$.
\end{Remark}

The following result generalizes \cite[Proposition~4.3]{rov-128}
(and \cite[Proposition~4.1]{NDY-2018}) to the case~$s>1$.

\begin{Proposition}\label{Th-4.1}
For a weak nearly ${\cal C}$- (non-weak--${\cal C}$-) manifold $(M^{\,2n+s}, {f},Q,\vec\xi,\vec\eta,g)$
with conditions \eqref{E-30b-xi} and  \eqref{E-30-xi}, 
the equality ${\bf h}\equiv0$ is valid if and only if the manifold is locally 
the Riemannian product of 
$\mathbb{R}^s$
and a weak nearly K\"{a}hler (non-weak-K\"{a}hler) manifold.
\end{Proposition}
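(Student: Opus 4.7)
\textbf{Proof plan for Proposition~\ref{Th-4.1}.}

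The plan is to prove both implications separately. For the \emph{if} direction, I would invoke Example~\ref{Ex-C-S}$(iii)$ directly: on the Riemannian product $M = \bar M \times \mathbb R^s$ with the stated structure, each $\xi_i = (0, \partial_{y^i})$ is a parallel vector field on the Euclidean factor, so $h_i = \nabla \xi_i = 0$ for every $i$. If in addition $M$ is not a weak $\mathcal C$-manifold (i.e., $\nabla f \not\equiv 0$), then the product formulas from Example~\ref{Ex-C-S}$(ii)$--$(iii)$ immediately force $\bar\nabla \bar f \not\equiv 0$, so $\bar M$ is non-weak-K\"ahler.

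For the \emph{only if} direction, assume $h_i \equiv 0$ for all $1 \le i \le s$. First I would observe that $\nabla \xi_i = h_i = 0$ means each $\xi_i$ is parallel on $M$. Together with \eqref{E-30b-xi}, the Reeb distribution $\ker f = \operatorname{span}\{\xi_1, \ldots, \xi_s\}$ is then spanned by parallel orthonormal vector fields, hence is integrable with flat, totally geodesic leaves. As the orthogonal complement of a parallel distribution, $\mathcal D = \bigcap_i \ker \eta^i$ is likewise parallel, integrable, and totally geodesic. Applying the local de Rham decomposition theorem, $(M, g)$ splits locally as $\mathbb R^s \times \bar M^{2n}$, where $\mathbb R^s$ corresponds to a leaf of $\ker f$ (carrying the flat Euclidean metric) and $\bar M$ corresponds to a leaf of $\mathcal D$.

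Next I would transfer the weak structure to $\bar M$. Since $f(\mathcal D) \subseteq \mathcal D$, the restriction $\bar f := f|_{\mathcal D}$ is a well-defined skew-symmetric $(1,1)$-tensor on $\bar M$, and by \eqref{2.1} on $\mathcal D$ one has $\bar f^{\,2} = -Q|_{\mathcal D}$, which is negative definite because $Q$ is positive definite. Hence $(\bar M, \bar f, \bar g)$ with $\bar g = g|_{\mathcal D}$ is a weak Hermitian manifold. Because $\mathcal D$ is parallel and totally geodesic, the Levi-Civita connection of $\bar g$ agrees with the restriction of $\nabla$ to $\mathcal D$-vector fields, so for $X, Y \in \mathcal D$ the weak nearly $\mathcal C$-identity \eqref{E-nS-01b} restricts cleanly to
\[
 (\bar\nabla_X \bar f)Y + (\bar\nabla_Y \bar f)X = (\nabla_X f)Y + (\nabla_Y f)X = 0,
\]
showing $(\bar M, \bar f, \bar g)$ is weak nearly K\"ahler. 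Finally, to match the parenthetical statement, suppose $\bar M$ were weak K\"ahler, i.e., $\bar\nabla \bar f = 0$. Then on $\mathcal D$ one would have $\nabla_X f = 0$ for $X \in \mathcal D$, and using Lemma~\ref{L-nS-02}'s identity \eqref{E-nS-01c} together with $h_i \equiv 0$ gives $(\nabla_X f)\xi_i = -f h_i X = 0$ as well, so $\nabla f \equiv 0$ on $M$, contradicting the assumption that $M$ is non-weak-$\mathcal C$.

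The routine algebraic checks (skew-symmetry of $\bar f$, nondegeneracy of $\bar f|_{\mathcal D}$, verification that $\bar\nabla$ coincides with the restricted connection) are immediate once the splitting is in place. The main conceptual step is the application of the de Rham decomposition theorem, which requires the parallelism of \emph{both} distributions; this is where the hypothesis $h_i \equiv 0$ does its essential work. No curvature hypotheses \eqref{E-nS-10} or \eqref{E-nS-04c} are invoked, consistent with the statement's generality.
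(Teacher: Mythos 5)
Your proposal is correct and follows essentially the same route as the paper: both directions rest on showing that ${\bf h}\equiv0$ forces $\ker f$ and $\mathcal D$ to be integrable with totally geodesic (and, for $\ker f$, flat) leaves, then invoking the local de Rham decomposition and restricting the structure to the $\mathcal D$-factor. Your observation that $h_i\equiv0$ simply means each $\xi_i$ is parallel (so both distributions are parallel at once) is a marginally more direct way to reach the de Rham hypotheses than the paper's passage through $d\eta^i=0$, and your explicit check of the non-weak-$\mathcal C$ $\Leftrightarrow$ non-weak-K\"ahler correspondence supplies a detail the paper leaves implicit.
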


\begin{proof}
By \eqref{E-c-01b} and the condition ${\bf h} \equiv 0$, we have $d\eta^i=0$ for all $i$. 
We have 
\[
 0=d\eta^i(X,Y)=-\eta^i([X,Y])\quad(1\le i\le s,\  X,Y\in{\cal D}),
\] 
which indicates that $[X,Y] \in{\cal D}$; thus the 
distribution ${\cal D}$ is integrable. Integral manifolds of ${\cal D}$ are totally geodesic as
\[
 g(\nabla_X\,Y, \xi_i)=-g(Y, h_iX)=0\quad
 (1\le i\le s,\ X,Y\in{\cal D}).
\]
Since $\operatorname{ker} f$ is integrable, totally geodesic and flat (see Lemma \ref{L-nS-01}), by de Rham Decomposition Theorem, 
the manifold is locally 
the Riemannian product $\mathbb{R}^s\times\bar M$.
The~weak nearly ${\cal C}$-structure on $M$ induces on $\bar M$ a weak nearly K\"{a}hler structure.

Conversely, if a weak nearly ${\cal C}$-manifold is locally 
the Riemannian product $\mathbb{R}^s\times\bar M$, where $\bar M$ is a weak nearly K\"{a}hler manifold and $\xi_i = (\partial_{x_{i}},0)$ ($(x,y) \in \mathbb{R}^s\times\bar M$), then $d\eta^i=0$ for $1\le i\le s$ or simply ${\bf h}\equiv0$, by \eqref{E-c-01b}. 
By this and the definition of $\xi_i$, the conditions \eqref{E-30b-xi} and  \eqref{E-30-xi} are true. 
\end{proof}

The following result generalizes \cite[Proposition~4.2]{rov-128} (and \cite[Proposition~4.3]{NDY-2018}) to the case $s>1$.

\begin{Proposition}\label{P-4.3}
Let $(M^{\,2n+s}, {f},Q,\vec\xi,\vec\eta,g)$ be a weak nearly $\mathcal{C}$-manifold with conditions \eqref{E-30b-xi}, \eqref{E-30-xi}, \eqref{E-nS-10} and \eqref{E-nS-04c}.
Then
\begin{itemize}[noitemsep]
 \item[(i)] \vskip-2mm
 distributions $\ker f\oplus{\cal D}_{i,j}\ (1 \le i \le s,\,1 \le j \le r_i )$ are integrable with totally geodesic leaves.
\end{itemize}
\vskip-2mm
Moreover, if 
$\dim(\ker d\eta^i)>s$ for all $i$,
then the following is true: 
\begin{itemize}[noitemsep]
\item[(ii)] \vskip-2mm
${\cal D}_{i,0}\ne0$ for all $i$, and
distributions $\operatorname{ker}f\oplus{\cal D}_{i,0}$ are integrable with totally geodesic leaves;
 \item[(iii)] if the exterior derivatives of the coframe 
 $\vec\eta$ have a common kernel, then  distributions ${\cal D}_{i,0}$ coincide, are integrable with totally geodesic leaves, and the induced structure on each leaf of ${\cal D}_{i,0}$ is a weak nearly K\"{a}hler structure $(\bar{f}, \bar g)$ with the property $\bar\nabla(\bar{f}^{\,2})=0$;
\item[(iv)] the distribution 
$\ker f\oplus{\cal D}_{i,1}\oplus\ldots\oplus{\cal D}_{i,r_i}$ is integrable with totally geodesic leaves.
\end{itemize}

\end{Proposition}

\begin{proof}
Consider an eigenvector $X$ of $h_i^2$ with eigenvalue $-\lambda^2_{i,j}$, see \eqref{E-nS-11b}. 
Then $\nabla_X\,\xi_k = h_kX \in{\cal D}_{i,j}$ ($k=1,\ldots,s$), as $h_i^2(h_kX)=h_k(h_i^2X)=-\lambda^2_{i,j}\,h_kX$. 
Since \eqref{E-nS-11} implies $\nabla_{\xi_k} h_i^2=0$, then $\nabla_{\xi_k} X$ is also an eigenvector of $h_i^2$ with eigenvalue $-\lambda^2_{i,j}$ for all $k=1,..,s$. By Lemma \ref{L-nS-01}, the distribution $\operatorname{ker} f$ defines a flat totally geodesic foliation. Moreover, since $\nabla_X\,\xi_k,\ \nabla_{\xi_k} X \in \mathcal{D}_{i,j}$ for all $X \in \mathcal{D}_{i,j}$, we have $\operatorname{ker} f\oplus\mathcal{D}_{i,j} $ is integrable (involutive).
Now, taking $X, Y \in{\cal D}_{i,j}$ and applying \eqref{E-nS-11}, we get
\[
 h_i^2(\nabla_X\,Y) = -\lambda^2_{i,j}\,\nabla_X Y - (\nabla_X\,h_i^2)Y = -\lambda^2_{i,j}\,\nabla_X Y + \lambda^2_{i,j}\sum\nolimits_{k}g(X, h_iY)\,\xi_k.
\]
Using the above and the anti-commutativity of $f$ and $h_i$, 
we have 
\[
 h_i^2({f}^2\nabla_X Y) = {f}^2(h_i^2\nabla_X Y) 
 = -\lambda^2_{i,j}\,{f}^2(\nabla_X Y);
\]
hence, ${f}^2\nabla_X Y \in{\cal D}_{i,j}$.
Similarly, using \eqref{E-nS-01d}, we get $\widetilde Q\nabla_X Y \in{\cal D}_{i,j}$.
It follows that
\[
 \nabla_X Y = -\widetilde Q\,\nabla_X Y -{f}^2\nabla_X Y +\sum\nolimits_{k} \eta^k(\nabla_X Y)\,\xi_k,
\]
see \eqref{2.1}, belongs to the distribution $\ker f\oplus{\cal D}_{i,j}$. 
This proves $(i)$.

\smallskip

Let $\dim(\ker d\eta^i)>s$ for all $i$, i.e., the multiplicity of the eigenvalue 0 of $h_i^{\,2}$ be greater than~$s$. 
Similarly to $(i)$, we obtain
$\nabla_X\xi_k, \nabla_{\xi_k} X\in\ker f\oplus \mathcal{D}_{i,0}$ for $X \in \ker f \oplus \mathcal{D}_{i,0}$ and $1 \le k \le s$.
Hence, the distribution $\ker f \oplus \mathcal{D}_{i,0}$ is integrable.
Moreover, by~\eqref{E-nS-11} we have 
$(\nabla_X h_i^{\,2})Y = 0,$ for
  $X,Y \in \operatorname{ker}f \oplus \mathcal{D}_{i,0}.$
By the above, $h_i^{\,2}(\nabla_X Y)=0$ for $1 \le i \le s$, so $\ker f \oplus \mathcal{D}_{i,0}$ is totally geodesic, proving~$(ii)$.

Since $ \mathcal{D}_{i,0}\ne0$,
we get $h_i^{\,2}(\nabla_X Y)=0$ for all $X,Y \in \mathcal{D}_{i,0}$. In~addition, we compute
\[
g(\nabla_X Y, \xi_k)
  = -g\big(Y, \nabla_X \xi_k\big)
  = -g(Y, h_k X),
\]
which vanishes if 
$\operatorname{ker} h_i =\operatorname{ker} h_k$ for all $1 \le i,k \le s$. 
By~\eqref{E-c-01b}, this condition is equivalent to the fact that the exterior derivatives of the coframe~$\vec\eta$
have a common kernel.
Thus, each distribution $D_{i,0}$ defines a totally geodesic foliation if and only if the exterior derivatives of the coframe $\vec\eta$
have a common kernel.
By \eqref{E-nS-01b} and 
Lemma~\ref{L-nS-02}, the leaves of $D_{i,0}$ are ${f}$-invariant and $Q$-invariant.
Thus, the weak nearly ${\cal C}$-structure on $M$ with conditions \eqref{E-30b-xi}, \eqref{E-30-xi}, \eqref{E-nS-10} and \eqref{E-nS-04c}
induces a weak nearly K\"{a}hler structure $(\bar{f}, \bar g)$ on each leaf of $D_{i,0}$ with the property $\bar\nabla(\bar{f}^{\,2})=0$,
where $\bar\nabla$ is the Levi-Civita connection of $\bar g$. This proves~$(iii)$.

\smallskip

To establish $(iv)$, it suffices to verify that for $X \in{\cal D}_{i,j}$ and $Y \in{\cal D}_{i,k}$ with $j\ne k$, one has $\nabla_X Y\in\ker f\oplus {\cal D}_{i,1}\oplus\cdots\oplus{\cal D}_{i,r}$. 
Similarly to part $(i)$, we obtain 
\[
 h_i^{2}(\nabla_X Y) = -\lambda_{i,k}\,\nabla_X Y 
+\lambda_{i,k}\sum\nolimits_{j} g(X,h_j Y)\,\xi_j;
\]
hence, $h_i^{2}(f^{2}\nabla_X Y) = -\lambda_{i,k}\, f^{2}\nabla_X Y$. 
Thus $f^{2}\nabla_X Y \in{\cal D}_{i,k}$, and we calculate $\widetilde Q\nabla_X Y \in{\cal D}_{i,k}$. Combi\-ning these observations, we may write
\[
\nabla_X Y = -\widetilde Q(\nabla_X Y) - f^{2}\nabla_X Y + \sum\nolimits_{k=1}^{s} \eta^{k}(\nabla_X Y)\,\xi_k \in \operatorname{ker}f \oplus {\cal D}_{i,k}\subset \ker f\oplus {\cal D}_{i,1}\oplus\cdots\oplus{\cal D}_{i,r}.
\]
Similarly, we obtain $\nabla_Y X\in\operatorname{ker}f \oplus{\cal D}_{i,j}$. 
Consequently, both $[X,Y]$ and $\nabla_X Y$ belong to 
$\ker f \oplus{\cal D}_{i,1}\oplus \cdots \oplus {\cal D}_{i,r}$, which completes the proof.
\end{proof}

\begin{Definition}[see \cite{Fin-2024}]\label{D-05}
\rm
Let $M$ be a $(2n+s)$-dimensional smooth manifold where  $n, s$ are positive integers. 
An $s$-\textit{contact structure} on $M$ is a collection 
$\vec{\eta} = (\eta^1, \cdots, \eta^s)$ of $s>1$ pointwise linearly independent non-vanishing one-forms,
together with a splitting $TM = {\mathcal R} \oplus{\cal D}$
of the tangent bundle, satisfying the following conditions:
\begin{itemize}[noitemsep]
 \item[(i)]\vskip-1.5mm ${\cal D} := \bigcap_{\,i}\ker\eta^i$;
 \item[(ii)] $d\eta^i|_{\cal D}$ is non-degenerate
 for every $i$;
 \item[(iii)] $\ker d\eta^i = {\mathcal R}$ for every $i$.
\end{itemize}
\vskip-1.5mm
A manifold $M$ endowed with such a structure is called an $s$-\textit{contact manifold} and is denoted by 
$(M, \vec{\eta}, {\mathcal R}\oplus{\cal D})$. 
A coframe $\vec\eta$
(and the $s$-contact structure it defines) is said to be \textit{uniform} if it satisfies 
\begin{itemize}
\item[(iv)] \vskip-2mm
 $d\eta^i=d\eta^j$ for all $1 \le i,j \le s$.
\end{itemize}
\end{Definition}

An example of $s$-contact manifolds is a weak $f$-{\rm K}-contact manifold, in fact, it is uniform.
By~\eqref{E-c-01b}, $d\eta^i=d\eta^j \iff h_i=h_j$ for all $1 \le i,j \le s$.

\smallskip

Next, we will use Lemmas~\ref{L-R02}--\ref{T-new}
to characterize $(4+s)$-dimensional weak nearly ${\cal C}$-manifolds. 

The following result generalizes \cite[Theorem~4.2]{rov-128}
(see also Theorem~4.4 in \cite{NDY-2018}) to the case $s>1$.

\begin{Theorem}\label{T-4.4}
Let $(M^{\,2n+s}, {f}, Q, \vec\xi, \vec\eta, g)$ be a weak nearly $\cal C$-manifold satisfying \eqref{E-30b-xi},  \eqref{E-30-xi}, \eqref{E-nS-10} and \eqref{E-nS-04c}.
If the coframe $\vec\eta$
has a common exterior derivative, see (iv) of Definition~\ref{D-05}, then $n=2$ and~$M$ admits a weak $f$-K-contact structure 
$(\hat f,\hat Q, \vec\xi, \vec\eta,g)$ defined by 
$\hat f = -\nabla \xi_i$ and $\hat QX=R_{\xi_i}X\ (X \in{\cal D})$.
\end{Theorem}

\begin{proof}
We consider 2-forms $\Phi^{(k)}_{i}(X,Y)=g({f} h_i^k X, Y)$ and $\Phi^{(1)}_{i,j}(X,Y)=g({f} h_ih_j X, Y)$, where $i=1,\dots s,\ k = 0, 1$. Also, since $h_i$ and $h_j$ commute, we have $\Phi^{(1)}_{i,j} = \Phi^{(1)}_{j,i}$.
It is easy to calculate
  $d\Phi(X,Y,Z) = g((\nabla_X {f})Z,Y) +g((\nabla_Y\,{f})X, Z) +g((\nabla_Z\,{f})Y, X)$,
see \cite{RP-2}.
We will show that
\begin{equation}\label{E-c-03}
 d\Phi^{(0)}_{i} = 3 \sum\nolimits_{j}\eta^j\wedge \Phi^{(1)}_{j},\quad
 d\Phi^{(1)}_{i} = 3\sum\nolimits_{j}\eta^j\wedge \Phi^{(1)}_{i,j}.
\end{equation}
Indeed, applying \eqref{E-c-01} and ${f}\,\xi_i=0$, we find the $\xi_i$-component of $(\nabla_X {f}) Y$:
\begin{equation}\label{E-c-03xi}
  g((\nabla_X {f}) Y, \xi_i) = - g((\nabla_X {f})\,\xi_i, Y) = g({f}\nabla_X\,\xi_i, Y) = g({f} h_i X, Y).
\end{equation}
Replacing $Z$ by ${f} Z$ in \eqref{E-3.50} and using \eqref{E-nS-01b}, we obtain for all $1 \le i \le s$
\begin{align}\label{E-3.50b}
 -g((\nabla_{X} {f})Y, {f}^2Q h_j Z) = \sum\nolimits_{i} \big[&\eta^i(X)g(h_j Y,h_ifZ){-}\eta^i(Y)g(h_jX,h_ifZ)\notag \\ 
 &+\eta^i(X)g(\widetilde Q(Q{+} {\rm Id})Y,h_ih_jfZ)\big].
\end{align}
By conditions, ${\bf h}\not \equiv0$ on ${\cal D}\setminus\{0\}$ and $Q(\mathcal D)=\mathcal{D}$, thus from \eqref{E-3.50b} we get
 $g((\nabla_{X}\,{f})Y, V) =0$ for $X,Y,V\in{\cal D}$.
By~the above and \eqref{E-c-03xi}, using $X=X^\top+\sum\nolimits_{i}\eta^i(X)\,\xi_i$ and $Y=Y^\top+\sum\nolimits_{i}\eta^i(Y)\,\xi_i$, we obtain
\begin{align}\label{E-c-03b}
\nonumber
 & g((\nabla_{X}\,{f})Y, V)\notag \\ 
 & =\sum\nolimits_{i}  \big[ \eta^i(V)\,g((\nabla_{X^\top}\,{f})Y^\top, \xi_i) \nonumber
 +\eta^i(X)\,g((\nabla_{\,\xi_i}\,{f})Y^\top, V) +\eta^i(Y)\,g((\nabla_{X^\top}\,{f})\,\xi_i, V)\big] \\
\nonumber
 & = \sum\nolimits_{i}  \big[ -\eta^i(V)\,g((\nabla_{X^\top}\,{f})\,\xi_i, Y^\top)
 -\eta^i(X)\,g((\nabla_{Y^\top}\,{f})\,\xi_i, V) +\eta^i(Y)\,g((\nabla_{X^\top}\,{f})\,\xi_i, V)  \big]\\
 & =  \sum\nolimits_{i}  \big[\eta^i(V)\,g({f} h_i X, Y) +\eta^i(X)\,g({f} h_i Y, V) +\eta^i(Y)\,g({f} h_i V, X) \big].
\end{align}
Using the expression  for $d\Phi_{i}^{(0)}$ and \eqref{E-c-03b}, we have the first equality of \eqref{E-c-03}.

Similarly, using \eqref{E-c-03b} and \eqref{E-3.23b} of Lemma~\ref{L-nS-04}, we get
\begin{align*}
& g((\nabla_X\,({f}\,h_i))Y, Z) = g((\nabla_X {f})h_iY, Z) + g({f}(\nabla_X\,h_i)Y, Z) \\
 &=  \sum\nolimits_{j}  \big[ \eta^j(X)\,g({f}\,h_ih_j Y, Z) +\eta^j(Y)\,g({f}\,h_ih_j Z, X) +\eta^j(Z)\,g({f}\,h_ih_j X, Y)\big],
\end{align*}
which implies the second equality of \eqref{E-c-03}.
From \eqref{E-c-03}, using the symmetry: 
$\Phi^{(1)}_{i,j}=\Phi^{(1)}_{j,i}$ and the anti-symmetry
of wedge products: $\eta^j\wedge\eta^i=-\eta^i\wedge\eta^j$,
we obtain
\begin{equation}\label{E-Nic-27}
 0=   d^2\Phi^{(0)}_{i} =
  3\sum\nolimits_{j} \big[d\eta^j\wedge \Phi^{(1)}_{j} - \eta^j\wedge d\Phi^{(1)}_{j} \big]=3\sum\nolimits_{j} d\eta^j\wedge \Phi^{(1)}_{j}.
\end{equation}
Substituting $Y = \xi_i$ into \eqref{E-3.23b} and applying \eqref{E-nS-01a}, we represent the $\xi$-sectional curvature as
\begin{equation*}
 K(\xi_i, X) = g(h_i X, h_i X) = \|h_i X\|^2 
 \quad (X \in \mathcal{D},\ g(X, X) = 1,\ 1 \le i \le s).
\end{equation*}
Under the assumptions, the $\xi$-sectional curvature of $(M,g)$ is positive, which implies that the Jacobi operators 
$R_{\,{\xi_i}}: X \mapsto R_{X,\,{{\xi_i}}}\,{{\xi_i}}\ (1 \le i \le s)$ are positive definite on $\mathcal{D}$. 
By \cite[Theorem~2]{rov2024}, there exists a weak $f$-K-contact structure $(\hat f,\hat Q, \vec\xi, \vec\eta,g)$ on $M$ defined by 
$\hat f = -\nabla \xi_i$ and $\hat QX=R_{\xi_i}X\ (X\in{\cal D})$. 

As a consequence of this, $\eta^i$ is a contact form and $d\eta^i$ is non-degenerate on $\mathcal D$. Thus, $M$ is a uniform $s$-contact manifold, see Definition~\ref{D-05}.
Now, if $\dim M > 4 + s$, then (since $M $ is a uniform $s$-contact manifold) \eqref{E-Nic-27} implies that 
$d\eta^j \wedge \sum_{j} \Phi^{(1)}_j = 0\, \text{for all } j.$
By~Lemma~\ref{T-new}, this condition forces $\Phi^{(1)}_i = 0$ for all $i$; hence, ${\bf h} \equiv 0$. This contradicts the fact that $d\eta^i$ is non-degenerate on $\mathcal{D}$. Therefore, $\dim M \le 4 + s$.
Finally, since $d\eta^i$ is non-degenerate on $\mathcal{D}$, the distribution $\operatorname{ker} d\eta^i$ has dimension $s$. 
We conclude that $h_i^2|_{\cal D}$ is non-zero and has at least one non-zero eigenvalue, say, $-\lambda^2$, whose multiplicity is $4$: if $X$ is an eigenvector of $h_i^2$ with eigenvalue $-\lambda^2$, then $\{X, fX,h_iX, h_ifX\}$ are also orthogonal eigenvectors of $h_i^2$ with the same eigenvalue. Therefore, $M$ is a $(4+s)$-dimensional manifold and the multiplicity of the eigenvalue $-\lambda^2$ of $h_i^2$ is $4$.
\end{proof}

\begin{Remark}\rm
The weak $f$-K-contact structure 
in Theorem~\ref{T-4.4} does not coincide with the given weak nearly $\mathcal{C}$-structure $({f}, Q, \vec\xi, \vec\eta, g)$, even though the $1$-forms $\eta^i$ are identical. 
Nevertheless, each $\eta^i$ is a contact form, thus $d\eta^i$ is non-degenerate on $\mathcal D$. 
\end{Remark}


We generalize the splitting Theorem~4.1 in \cite{rov-128}
(see also Theorem~4.5 in \cite{NDY-2018}) to the case $s>1$. 

\begin{Theorem}\label{Th-4.5}
Let $(M^{\,2n+s}, {f}, Q, \vec\xi, \vec\eta, g)$ be a weak nearly ${\cal C}$- (non-weak--${\cal C}$-) manifold of $n>2$
with conditions \eqref{E-30b-xi}, \eqref{E-30-xi}, \eqref{E-nS-10} and \eqref{E-nS-04c}. 
If the coframe $\vec\eta$ 
has a common exterior derivative, see (iv) of Definition~\ref{D-05},
then $M$ is locally isometric (the isometry is global if $M$ is complete and simply connected) to one of the Riemannian products 
\begin{align*}
 (i)~\mathbb{R}^s\times \bar M^{\,2n},\quad 
 (ii)~B^{\,4+s} \times\bar M^{\,2n-4},
\end{align*}
where the induced structure on $\bar M$ is a weak nearly K\"{a}hler structure $(\bar{f},\bar g)$ satisfying $\bar\nabla(\bar{f}^{\,2})=0$, and the induced structure on $B$ 
is a weak nearly ${\cal C}$- (non-weak--${\cal C}$-) structure satisfying~\eqref{E-nS-10} and~\eqref{E-nS-04c}.
\end{Theorem}

\begin{proof}
If ${\bf h}\equiv0$, then by Proposition~\ref{Th-4.1}, $M$ is locally isometric to $\mathbb{R}^s\times \bar M^{\,2n}$.

By the assumptions and \eqref{E-c-01b}, we have  $h_i=h_j$ for all $i$ and $j$.
Let $h_i\ne0$ on ${\cal D}\setminus\{0\}$ for all $i$, then  
$Spec(h_i^2) = \{0, -\lambda_{i,1}^2,\ldots 
-\lambda_{i,r_i}^2\} $, see \eqref{E-nS-11b},
where $r_i\ge1$ and each $\lambda_{i,k}$ is a positive number. 
If~$\dim\ker h_i=s$, then by Theorem \ref{T-4.4}, $n=2$ which contradicts the assumption that $n>2$. So, we must have $\dim\ker h_i >s$.
By Proposition~\ref{P-4.3}\,$(iii, iv)$, and according to the de Rham Decomposition Theorem, 
$M$ is locally isometric to the Riemannian product $B\times\bar M$. Here, $B$ corresponds to integral manifolds of the distribution 
$\ker f\oplus{\cal D}_{i,1}\oplus\ldots\oplus{\cal D}_{i,r_i}$, 
and $\bar M$ corresponds to integral manifolds of $D_{i,0}$,
which is endowed with a weak nearly K\"{a}hler structure $(\bar{f},\bar g)$ and, by the condition \eqref{E-nS-10},
has the property $\bar\nabla(\bar{f}^{\,2})=0$.
Note that $B$ is endowed with an induced weak nearly ${\cal C}$- (non-weak--${\cal C}$-) structure, for which $0$ is an eigenvalue of $h_i^2$ with multiplicity $s$ and the coframe $\vec\eta$ have a common exterior derivative.
By~Theorem \ref{T-4.4}, $B$ is a $(4+s)$-dimensional manifold and consequently, $\dim\bar M=2n-4$.
If~the manifold $(M,g)$ is complete and simply connected, then we apply the de Rham Decomposition Theorem to get the global isometry.
\end{proof}

\begin{Remark} \rm
In aim to obtain an example illustrating Theorem~\ref{Th-4.5}, we ask the following question: \textit{Does there exist a nontrivial 
weak nearly ${\cal C}$-manifold $B^{\,4+s}$ with $s>1$
}?
\end{Remark}

\section{Auxiliary results}
\label{sec:02-ns}

In this section, we consider weak nearly $\mathcal{C}$-manifolds satisfying some of conditions \eqref{E-30b-xi}, \eqref{E-30-xi}, \eqref{E-nS-10} and~\eqref{E-nS-04c}. 
We prove 
lemmas generalizing known properties of weak nearly cosymplectic~manifolds.

The following result generalizes \cite[Lemma~3.1]{rov-128}
for the case of $s>1$.

\begin{Lemma}\label{L-nS-02}
For a weak nearly ${\cal C}$-manifold 
$(M^{\,2n+s}, {f},Q,\vec\xi,\vec\eta,g)$ satisfying \eqref{E-30b-xi} and \eqref{E-30-xi}, we get
\begin{align}\label{E-nS-01a}
 & (\nabla_X\,h_i)\,\xi_j = -h_ih_j X \quad (1 \le i,j\le s),\\
\label{E-nS-01c}
 & (\nabla_X {f})\,\xi_i = -{f}\,h_i X \quad (1 \le i\le s),\\
\label{E-nS-01bb}
 & h_i\,{f} + {f}\,h_i =0\quad (1 \le i\le s)\quad (h_i\ {\rm anticommutes\ with}\ {f}),\\
\label{E-nS-01d}
 & h_i\,Q = Q\,h_i \quad (1 \le i\le s) \quad (h_i\ {\rm commutes\ with}\ Q).
\end{align}
\end{Lemma}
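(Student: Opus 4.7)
The plan is to exploit that all four identities are algebraic consequences of: (i) the weak nearly ${\cal C}$ identity \eqref{E-nS-01b}; (ii) the skew-symmetries of $f$, of $h_i$ (established in the excerpt just before the lemma), and of $\nabla_X f$; and (iii) the vanishings $f\xi_i=0$, $\eta^j\circ h_i=0$, and $h_i\xi_j=0$. The first step is to install the last of these. Writing $h_i\xi_j=\nabla_{\xi_j}\xi_i$, Lemma~\ref{L-nS-01} (whose hypotheses \eqref{E-30b-xi} and \eqref{E-30c-xi} are in force, the latter being a corollary of \eqref{E-30-xi}) tells us $\ker f$ is totally geodesic, so $\nabla_{\xi_j}\xi_i\in\ker f$; at the same time \eqref{E-30-xi} forces $\nabla_{\xi_j}\xi_i\perp\ker f$, and these two together give $h_i\xi_j=0$. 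The relation $\eta^j\circ h_i=0$ is just \eqref{E-30-xi} restated via $\eta^j(X)=g(X,\xi_j)$.

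With these prerequisites in place, \eqref{E-nS-01a} and \eqref{E-nS-01c} reduce to one-line covariant differentiations. Differentiating the identity $h_i\xi_j=0$ yields $(\nabla_X h_i)\xi_j=-h_i(\nabla_X\xi_j)=-h_ih_jX$, which is \eqref{E-nS-01a}. Similarly, differentiating $f\xi_i=0$ yields $(\nabla_X f)\xi_i=-f(\nabla_X\xi_i)=-fh_iX$, which is \eqref{E-nS-01c}.

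The anticommutation \eqref{E-nS-01bb} is the main obstacle. I would substitute $Y=\xi_i$ into the weak nearly ${\cal C}$ condition \eqref{E-nS-01b} and use the just-proven \eqref{E-nS-01c} to obtain $(\nabla_{\xi_i}f)X=fh_iX$. Applying the skew-symmetry of $\nabla_{\xi_i}f$ (noted in the excerpt immediately after the definition of a weak nearly ${\cal C}$-structure) gives $g(fh_iX,Y)+g(X,fh_iY)=0$. Using skew-symmetry of $f$ and then of $h_i$, I rewrite $g(fh_iX,Y)=-g(h_iX,fY)=g(X,h_ifY)$, so the previous relation becomes $g(X,\,h_ifY+fh_iY)=0$ for every $X,Y$, yielding \eqref{E-nS-01bb}.

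Finally, \eqref{E-nS-01d} follows from \eqref{E-nS-01bb} together with the structural relation $Q=-f^2+\sum_{j}\eta^j\otimes\xi_j$ read off from \eqref{2.1}. Because $h_i\xi_j=0$ and $\eta^j\circ h_i=0$, each projection term $\eta^j\otimes\xi_j$ commutes trivially with $h_i$, so $[h_i,Q]=-[h_i,f^2]$. Using \eqref{E-nS-01bb} twice, $h_if^2=(h_if)f=-(fh_i)f=-f(h_if)=f(fh_i)=f^2h_i$, so $[h_i,Q]=0$. Thus all four identities follow once \eqref{E-nS-01bb} is in hand, and that is the only step requiring more than a direct differentiation of an algebraic identity.
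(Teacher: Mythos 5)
Your proposal is correct and follows essentially the same route as the paper: \eqref{E-nS-01a} and \eqref{E-nS-01c} by differentiating $h_i\xi_j=0$ and $f\xi_i=0$, and \eqref{E-nS-01d} from \eqref{2.1} together with $h_i\xi_j=0$, $\eta^j\circ h_i=0$ and the anticommutation. The only cosmetic difference is in \eqref{E-nS-01bb}: the paper differentiates $g(fY,\xi_i)=0$, symmetrizes in $X,Y$ and invokes \eqref{E-nS-01b}, whereas you set $Y=\xi_i$ in \eqref{E-nS-01b} and use the skew-symmetry of $\nabla_{\xi_i}f$ — these are equivalent manipulations of the same identities.
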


\begin{proof}
Differentiating the equality $h_i\,\xi_j=0$ and using
\eqref{E-c-01}, we obtain \eqref{E-nS-01a}:
\[
 0=\nabla_X\,(h_i\,\xi_j) = (\nabla_X\,h_i)\,\xi_j +h_i(\nabla_X\,\xi_j) =(\nabla_X\,h_i)\,\xi_j +h_ih_j X.
\]
Equation \eqref{E-nS-01c} is obtained by differentiating the relation $f\xi_i=0$ and using the definition of $h_i$.
Differentiating 
$g({f}\,Y,\xi_i)=0$ yields 
 $0= g((\nabla_X {f})Y, \xi_i) +g({f}\,Y, h_i X)$.
Interchanging $X$ and $Y$ in the last equation, then adding the resulting expression to the original and applying \eqref{E-nS-01b}, yields \eqref{E-nS-01bb}.
Finally, from \eqref{2.1} and \eqref{E-30-xi} we get $f^2h_i X-h_i f^2X=-Qh_iX+h_iQX$ for all $X \in \mathfrak{X}_M$, and \eqref{E-nS-01d} follows from this by applying~\eqref{E-nS-01bb}.
\end{proof}

The following lemmas generalize certain formulas in 
\cite[Lemmas~3.2-3.5]{rov-128}  
for the case of $s>1$.

\begin{Lemma}\label{L-R02}
For a weak nearly ${\cal C}$-manifold satisfying \eqref{E-nS-10} we obtain
\begin{align}\label{E-3.29}
 g((\nabla_X {f}){f} Y, Z) &= g((\nabla_X{f})Y, {f} Z) +\sum\nolimits_{i}\big[ \eta^i(Y) g(h_iX, QZ) +\eta^i(Z) g(h_i X, Q Y)\big] ,\\
\label{E-3.30}
 g((\nabla_{{f} X}{f})Y, Z)&= g((\nabla_X {f})Y, {f} Z) +\sum\nolimits_{i}\big[ \eta^i(X) g(h_iZ, QY) + \eta^i(Z) g(h_i X, Q Y)\big] ,\\
\label{E-3.31}
 g((\nabla_{{f} X} {f}){f} Y, Z) &= g((\nabla_X {f})Q Z, Y) + \sum\nolimits_{i}\big[ \eta^i(X) g(h_i Z,QfY) +   \eta^i(Y) g(h_iX, Q fZ) \notag  \\ 
 & \ +\eta^i(Z) g({f} h_i X, \widetilde Q Y)\big] .
\end{align}
\end{Lemma}

\begin{proof}
Differentiating \eqref{2.1} and using \eqref{E-nS-10b}, and the skew-symmetry of $\nabla_X {f}$, we get \eqref{E-3.29}.
%
We obtain \eqref{E-3.30} from \eqref{E-3.29} by the definition of weak nearly ${\cal C}$-manifolds, followed by switching $X$ and $Y$.
%
Replacing $Y$ by ${f} Y$ in \eqref{E-3.30} and using \eqref{E-3.29} and \eqref{2.1}, we get~\eqref{E-3.31}.
\end{proof}

\begin{Lemma}\label{L-R01}
The curvature tensor of a weak nearly ${\cal C}$-manifold satisfies the equalities
\begin{align}\label{E-3.4}
 &g(R_{{f} X,Y}Z, V) +g(R_{X,{f} Y}Z, V) +g(R_{X,Y}{f} Z, V) +g(R_{X,Y}Z, {f} V) = 0,\\
 \label{E-nS-05ccc}
& R_{\,\xi_j,\, \xi_k}=0\quad(1 \le j,k\le s);
\end{align}
moreover, if the condition  \eqref{E-nS-04c} is true, then
\begin{align}
\label{E-nS-04ccc}
& g(R_{\,\xi_j, Z}\,{f} X,{f} Y) = 0 \quad (1 \le j \le s),
\quad \mbox{\rm hence,}\ \mathcal{D} \ \mbox{\rm is curvature invariant}.
\end{align}
\end{Lemma}

\begin{proof}
Differentiating \eqref{E-nS-01b}, we~find
\begin{eqnarray}\label{EF-nS-01}
 (\nabla^2_{X,Y}\,{f})Z + (\nabla^2_{X,Z}\,{f})Y = 0 \quad (X,Y,Z \in \mathfrak{X}_M).
\end{eqnarray}
Applying the Ricci identity, see \cite{E-2005},
\begin{equation}\label{E-nS-05}
 g((\nabla^2_{X,Y}{f})V, Z) - g((\nabla^2_{Y,X}{f})V, Z) = g(R_{X,Y}{f} V, Z) + g(R_{X,Y}V, {f} Z),
\end{equation}
from \eqref{EF-nS-01} and the skew-symmetry of $\nabla^2_{X,Y}\,{f}$ we get
\begin{eqnarray}\label{E-3.7}
 g(R_{X,Y}Z, {f} V) -g(R_{X,Y}V, {f} Z)  + g((\nabla^2_{X,Z}\,{f}) Y, V) - g((\nabla^2_{Y,Z}\,{f})X, V) = 0.
\end{eqnarray}
By Bianchi and Ricci identities, we find
\begin{align}\label{E-3.8}
\nonumber
 g(R_{X,Y}Z, {f} V) &= -g(R_{Y,Z}X, {f} V) - g(R_{Z,X}Y, {f}V) \\
 &= g((\nabla^2_{Y,Z}\,{f}) V, X) - g((\nabla^2_{Z,Y}\,{f})V, X)
  -g(R_{Y,Z}V, {f} X) - g(R_{Z,X}Y, {f} V).
\end{align}
Substituting \eqref{E-3.8} into \eqref{E-3.7}, we get 
\begin{align}\label{E-3.9}
\nonumber
 & g(R_{X,Z}Y, {f} V) -g(R_{X,Y}V, {f} Z) - g(R_{Y,Z}V, {f} X) \\
 &-g((\nabla^2_{Z,Y}\,{f})V, X) - g((\nabla^2_{X,Z}\,{f})V, Y) 
  = 2\,g((\nabla^2_{Y,Z}\,{f})X, V).
\end{align}
On the other hand, using the Ricci identity \eqref{E-nS-05} again, we see that
\begin{eqnarray}\label{E-3.10}
 g(R_{X,Z}Y, {f} V) -g(R_{X,Z}V, {f} Y)  - g((\nabla^2_{X,Z}\,{f})Y, V) +g((\nabla^2_{Z,X}\,{f})Y, V) = 0.
\end{eqnarray}
Adding \eqref{E-3.10} to \eqref{E-3.9} and applying \eqref{EF-nS-01}, we get
\begin{eqnarray}\label{E-3.11}
 2\,g(R_{X,Z}Y, {f} V) -g(R_{X,Y}V, {f} Z)  - g(R_{Y,Z}V, {f} X) -g(R_{X,Z}V, {f} Y) = 2\,g((\nabla^2_{Y,V}\,{f})Z, X).
\end{eqnarray}
Swapping $Y$ and $V$ in \eqref{E-3.11}, we find
\begin{eqnarray}\label{E-3.12}
 2\,g(R_{X,Z}V, {f} Y) -g(R_{X,V}Y, {f} Z)  - g(R_{V,Z}Y, {f} X) -g(R_{X,Z}Y, {f} V) = 2\,g((\nabla^2_{V,Y}\,{f})Z, X).
\end{eqnarray}
Subtracting \eqref{E-3.12} from \eqref{E-3.11}, and using Bianchi and Ricci identities, we get the equality,
which by replacing $Z$ and $Y$ gives \eqref{E-3.4}.

Replacing $X$ by ${f} X$ in \eqref{E-3.4} and using \eqref{2.1}, we have
\begin{align}\label{E-3.13}
\nonumber
 -g(R_{Q X,Y}Z, V) & +\sum\nolimits_{i}\eta^i(X)\,g(R_{\,\xi_i,Y}Z, V)  +g(R_{{f} X, {f} Y}Z, V) \\ 
 &+ g(R_{{f} X, Y}{f} Z, V) +g(R_{{f} X, Y}Z, {f} V) = 0.
\end{align}
Exchanging $X$ and $Y$ in \eqref{E-3.13}, we find
\begin{align}\label{E-3.14}
\notag 
 g(R_{X, Q Y}Z, V) & +\sum\nolimits_{i} \eta^i(Y)\,g(R_{\,\xi_i,X}Z, V) - g(R_{{f} X, {f} Y}Z, V)  \\
 & + g(R_{{f} Y, X}{f} Z, V) + g(R_{{f} Y, X}Z, {f} V) = 0.
\end{align}
Subtracting \eqref{E-3.14} from \eqref{E-3.13}, we obtain
\begin{align}\label{E-3.15}
\nonumber
 & 2\,g(R_{{f} X,{f} Y}Z, V) -2\,g(R_{X,Y}Z, V) +\sum\nolimits_{i}[\eta^i(X)\,g(R_{\,\xi_i,Y}Z, V) -\eta^i(Y)\,g(R_{\,\xi_i,X}Z, V)] \\
\nonumber
 & + g(R_{{f} X, Y}{f} Z, V) - g(R_{{f} Y, X}{f} Z, V) + g(R_{{f} X, Y}Z, {f} V) - g(R_{{f} Y, X}Z, {f} V) \\
 & - g(R_{\widetilde Q X, Y}Z, V) - g(R_{X, \widetilde Q Y}Z, V) = 0.
\end{align}
Then, inserting $fZ$ in place of $ Z$ and also $V$ by ${f} V$ in \eqref{E-3.4} and using \eqref{2.1}, we get two equations
\begin{align}\label{E-3.16}
 g(R_{X, Y}Q Z, V) &= \sum\nolimits_{i}\eta^i(Z)\,g(R_{X,Y}\,\xi_i, V) + g(R_{X,Y}{f} Z, {f} V) \notag\\ 
 & \quad + g(R_{X,{f} Y}{f} Z, V) + g(R_{{f} X,Y}{f} Z, V),\\
\label{E-3.17}
 g(R_{X, Y}Z, Q V) &= \sum\nolimits_{i}\eta^i(V)\,g(R_{X,Y}Z,\,\xi_i) 
 + g(R_{X,Y}{f} Z, {f} V) \notag\\ 
 & \quad + g(R_{{f} X, Y}Z, {f} V) + g(R_{X,{f} Y}Z, {f} V).
\end{align}
Adding \eqref{E-3.16} to \eqref{E-3.17}, and substituting the gotten equation into \eqref{E-3.15}, we have
\begin{eqnarray}\label{E-3.18}
\nonumber
 && 2\,g(R_{{f} X,{f} Y}Z, V) -2\,g(R_{X,Y}{f} Z, {f} V) + \sum\nolimits_{i} \big[\eta^i(X)\,g(R_{\,\xi_i,Y}Z, V)-\eta^i(Z)\,g(R_{X,Y}\,\xi_i, V) \\
 && \quad -\eta^i(V)\,g(R_{X,Y}Z,\,\xi_i)\, -\eta^i(Y)\,g(R_{\,\xi_i,X}Z, V) \big] +\delta(X,Y,Z,V) = 0,
\end{eqnarray}
where
\begin{equation*}
 \delta(X,Y,Z,V) = g(R_{X, Y}\widetilde Q Z, V) 
 +g(R_{X,Y}Z, \widetilde Q V) -g(R_{\widetilde Q X,Y}Z,V) 
 -g(R_{X, \widetilde Q Y}Z, V).
\end{equation*}
Substituting $fX$ in place of $ X$ and also $fY$ in place of $ Y$ in \eqref{E-3.18} and using \eqref{2.1}, we obtain
\begin{align}\label{E-3.19}
\notag
 & 2\,g(R_{Q X, Q Y}Z, V) 
  + 2\,\sum\nolimits_{i}\big[\eta^i(Y)\,g(R_{\,\xi_i,Q X}Z, V)-\eta^i(X)\,g(R_{\,\xi_i, Q Y}Z, V) \big]- 2\,g(R_{{f} X, {f} Y}\,{f} Z, {f} V) \\
 &  +\delta({f} X, {f} Y,Z,V) 
  = \sum\nolimits_{i}\big[\eta^i(Z)\,g(R_{\,\xi_i, V}{f} X,{f} Y) -\eta^i(V)\,g(R_{\,\xi_i, Z}\,{f} X, {f} Y) \big].
\end{align}
Inserting $X=\xi_j$ and $Y=\xi_k$ in \eqref{E-3.19}, we acquire $g(R_{\xi_j, \xi_k}Z, V)=0 $ for all $1 \le j,k\le s$;
hence \eqref{E-nS-05ccc}. 
This also yields $g(R_{\xi_j,\, \xi_k}fZ, fV)=0 $.
Taking $V=\xi_j$ in \eqref{E-3.19} and using the previous relation, we obtain
\begin{align}\label{E-3.20}
\nonumber
 & 2\,g(R_{\,Q X,\,Q Y}Z, \xi_j) +2\sum\nolimits_{i}\big[\eta^i(Y)\,g(R_{\,\xi_i,Q X}Z, \xi_j)-\eta^i(X)\,g(R_{\,\xi_i, Q Y}Z, \xi_j)\big] \\
 & +\,g(R_{\,\xi_j, Z}\,{f} X, {f} Y) +\delta({f} X, {f} Y,Z, \,\xi_j) = 0\quad (1 \le j \le s).
\end{align}
Replacing $X$ by ${f} X$ and $Y$ by ${f} Y$ in \eqref{E-3.20},
we obtain
\begin{align*}
 & 4\,g(R_{\,Qf X,\,Q fY}Z, \xi_j) -2\sum\nolimits_{i}\big[\eta^i(Y)\,g(R_{\,\xi_i,Q X}Z, \xi_j)-\eta^i(X)\,g(R_{\,\xi_i, Q Y}Z, \xi_j)\big] \\
 & +2\,g(R_{\,\xi_j, Z}\,{Q} X, {Q} Y)  +2\,\delta({f}^2 X, {f}^2 Y,Z, \,\xi_j) = 0\quad (1 \le j \le s).
\end{align*} 
Adding this to \eqref{E-3.20} and using $Q={\rm Id} +\widetilde Q$, we get 
\begin{align}\label{E-3.22}
\notag
 & 3\,g(R_{\,\xi_j, Z}\,{f} X, {f} Y) =
 -4\,g(R_{\,\xi_j, Z}\,\widetilde Q {f} X,\,\widetilde Q{f} Y) 
 -4\,g(R_{\,\xi_j, Z}\,{f} X,\,\widetilde Q {f} Y) \\
 & -4\,g(R_{\,\xi_j, Z}\,\widetilde Q {f} X,\,{f} Y)
 +\delta({f} X, {f} Y, Z, \,\xi_j) 
 +2\,\delta({f}^2 X, {f}^2 Y, Z,\,\xi_j)
 \quad(1 \le j \le s).
\end{align}
From \eqref{E-3.22}, using \eqref{E-nS-04c}, \eqref{E-nS-05ccc}
and the equality $\delta({f} X,{f} Y,Z,\xi_j)=0$,
we get \eqref{E-nS-04ccc}. Since ${f}|_{\mathcal{D}}$ is non-degenerate, the distribution $\mathcal{D}$ is curvature invariant, see \eqref{E-nS-04cc}.
\end{proof}

\begin{Lemma}\label{L-nS-04}
For a weak nearly ${\cal C}$-manifold 
satisfying conditions \eqref{E-30b-xi}, \eqref{E-30-xi}, \eqref{E-nS-10} and \eqref{E-nS-04c}, we get
\begin{align}
\label{E-3.23}
 & R_{\,\xi_i,\, X}Y = -(\nabla_X h_i)Y, \\
\label{E-3.24}
 & (\nabla_X h_i)Y = \sum\nolimits_{j}[g(h_ih_j X, Y)\,\xi_j - \eta^j(Y)\,h_ih_j X] , \\
\label{E-3.25}
 & {\rm Ric}\,(\xi_i, Z) = -\sum\nolimits_{j}\eta^j(Z)\,\operatorname{tr}(h_i h_j) .
\end{align}
In particular, $\nabla_{\xi_j} h_i=0$ and ${\rm tr}(h_i^2) = const$ for all $1 \le i,j \le s$.
Moreover,
\begin{align}\label{E-3.23b}
 g(R_{\,\xi_i, X}Y, Z) &= -g((\nabla_X\,h_i)Y, Z) =\sum\nolimits_{j} [\eta^j(Y)\,g(h_ih_j X, Z) -\eta^j(Z)\,g(h_ih_j X, Y)], \\
\label{E-3.6}
g(R_{{f} X,{f} Y} Z, V) & = g(R_{X, Y}{f} Z, {f} V) - \tfrac12\,\delta(X,Y,Z,V), \\
\label{E-3.5}
\nonumber
g(R_{{f} X,{f} Y}{f} Z, {f} V) & = g(R_{Q X, Q Y}Z, V) + \tfrac12\,\delta({f} X,{f} Y,Z,V) \\
&\quad +\sum\nolimits_{i}\big[\eta^i(Y)\,g(R_{\,\xi_i, Q X}Z, V)-\eta^i(X)\,g(R_{\,\xi_i,Q Y}Z, V)\big] . \end{align}
\end{Lemma}

\begin{proof}
By  Lemma \ref{L-nS-01},  $\xi_i$ is a Killing vector field, and applying \eqref{E-nS-04} in conjunction with  \eqref{E-c-01}, we get \eqref{E-3.23}.
Replacing $Y$ by ${f} Y$ and $Z$ by ${f} Z$ in $g(R_{\,\xi_i, X}Y, Z) = -g((\nabla_X\,h_i)Y, Z)$, see \eqref{E-3.23},
and using \eqref{E-nS-04ccc}, 
we get $g((\nabla_X h_i)\,{f} Y, {f} Z) = 0$,~hence,
\begin{equation}\label{E-nS-03f}
 g((\nabla_X h_i) Y, Z) = 0\quad (Y, Z\in{\cal D}, \ 1 \le i \le s).
\end{equation}
Then, using \eqref{E-nS-03f}, we find the $\ker f$-component and ${\cal D}$-component of $(\nabla_X h_i)Y$:
\begin{eqnarray*}
 && g((\nabla_X h_i)Y,\,\xi_j) = g(\nabla_X(h_i\,Y), \,\xi_j) = -g(h_i\,Y,\,\nabla_X\,\xi_j) = g(h_i h_j X, Y),\\
 && g((\nabla_X\,h_i)Y, Z) = \sum\nolimits_{j}\eta^j(Y)\,g((\nabla_X h_i)\,\xi_j, \,Z)
 = -\sum\nolimits_{j}\eta^j(Y)\,g(h_ih_j X, Z)\quad (Z\in{\cal D}),
\end{eqnarray*}
from which \eqref{E-3.24} follows.
From \eqref{E-3.24} with $X=\xi_k$ we find $\nabla_{\xi_k} h_i=0$.
Note that 
for $Y=\xi_j$, \eqref{E-3.24} reduces to \eqref{E-nS-01a}. 
By \eqref{E-3.24} and \eqref{E-3.23}, we get \eqref{E-3.23b}.
Let $\{e_k\}_{k=1,\ldots,2n+1}$ be a local orthonormal frame on $M$ with $e_{2n+j}=\xi_j\ (1 \le j \le s)$.
Putting $X=Y=e_k$ in \eqref{E-3.24} for $\{e_k\}$, then using \eqref{E-nS-04cc} and summing over $k=1,\ldots,2n+s$, we get
\begin{equation*}
 \sum\nolimits_{k=1}^{2n+s}(\nabla_{e_k} h_i)\, e_k
=\sum\nolimits_{j} \operatorname{tr}(h_i h_j)  \xi_j .
\end{equation*}
Next, taking the inner product of \eqref{E-3.23} with $Z$, tracing over $X$ and $Y$, and then substituting the above relation, we obtain \eqref{E-3.25}.
Replacing $Y$ by $h_i Y$ in \eqref{E-3.24}, putting $Y=e_k$ in the gotten equation and summing over $k=1,\ldots, 2n+s$,
we get
\[
\operatorname{tr}((\nabla_X h_i)h_i)=\sum\nolimits_{k=1}^{2n+s}g((\nabla_X h_i)h_ie_k,e_k) = \sum\nolimits_{j}\sum\nolimits_{k=1}^{2n+s} \eta^j(e_k)\,g(h_ih_jX,h_i e_k) =0.
\]
This implies $X({\rm tr}(h_i^2)) = 0\ (X\in\mathfrak{X}_M)$, i.e., ${\rm tr}(h_i^2) = const$.
Finally, using \eqref{E-3.18} and \eqref{E-3.23b}, we obtain 
\begin{align*}
 & 2 g(R_{{f} X,{f} Y}Z, V) -2 g(R_{X,Y}{f} Z, {f} V) 
 +\!\sum\nolimits_{i,j=1}^s \!\big[\eta^i(X)\eta^j(Z)g(h_ih_jY,V) 
 -\eta^i(Z)\eta^j(X)g(h_ih_jV,Y)   \\
 & -\eta^i(X)\eta^j(V)g(h_ih_jY,Z)
 +\eta^i(V)\eta^j(X)g(h_ih_jZ,Y) 
 +\eta^i(Z)\eta^j(Y)g(h_ih_jV,X) \\ 
 & -\eta^i(Y)\eta^j(Z)g(h_ih_jX,V)
 {+}\eta^i(Y)\eta^j(V)g(h_ih_jX,Z) 
 {-}\eta^i(V)\eta^j(Y)g(h_ih_jZ,X)\big] +\delta(X,Y,Z,V) =0.
\end{align*}
Simplification of this using the skew-symmetry of $h_i$ yields \eqref{E-3.6}.
Switching $X$ by ${f} X$ and $Y$ by ${f} Y$ in \eqref{E-3.6} and using \eqref{2.1}, we get \eqref{E-3.5}.
\end{proof}

\begin{Remark}\label{Rem-delta}\rm
The function $\delta$ of a weak nearly ${\cal C}$-manifold has the following symmetries:
\begin{eqnarray*}
 \delta(Y,X, Z,V) = \delta(X,Y, V,Z) = \delta(Z,V, X,Y) = -\delta(X,Y, Z,V) .
\end{eqnarray*}
If \eqref{E-nS-04c} is true, then by \eqref{E-3.23b}, we acquire
\[
 \delta(\xi_i,Y,Z,V)\!=\delta(X,\xi_i,Z, V)\!=\delta(X,Y,\xi_i, V)\!=\delta(X,Y,Z, \xi_i)=0\quad(1 \le i \le s).
\] 
\end{Remark}

\begin{Lemma}
For a weak nearly ${\cal C}$-manifold satisfying \eqref{E-30b-xi}, \eqref{E-30-xi}, \eqref{E-nS-10} and \eqref{E-nS-04c}, 
we have
for $1 \le j \le s$
\begin{align}\label{E-3.50}
 g((\nabla_{X} {f})Y, Q{f} h_j Z) {=} \sum\nolimits_{i} \big[\eta^i(X)g(h_j Y,h_iZ){-}\eta^i(Y)g(h_jX,h_iZ){+} \eta^i(X)g(\widetilde Q(Q{+} {\rm Id})Y,h_ih_jZ)\big]. 
\end{align}
\end{Lemma}

\begin{proof} It consists of two parts.
In Part 1, our formulas depend on four vectors 
from $TM$ and contain many additional $\widetilde Q$-dependent terms.
In~Part 2, the~formulas depend on three vectors from $TM$ and contain a few $\widetilde Q$-dependent terms.

\textbf{Part 1}. Differentiating \eqref{E-3.29} and using
$g((\nabla_X{f})(\nabla_V{f})Y, Z)$ $=-g((\nabla_V{f})Y, (\nabla_X{f})Z)$ gives
\begin{align}\label{E-3.32}
\nonumber
 & g((\nabla_V\,{f})Y, (\nabla_X {f})Z) + g((\nabla_X {f})Y, (\nabla_V\,{f})Z)
 = g((\nabla^2_{V,X}\,{f}){f} Y, Z) + g((\nabla^2_{V,X}\,{f}){f} Z, Y)\\
\nonumber
 & 
 -\sum\nolimits_{i} \big[g(h_iV,Z)\,g(h_iX,Y) +\eta^i(Z)\,g((\nabla_V h_i)X,Y)  +g(h_i V,Y)\,g(h_i X,Z)  \\
 &+\eta^i(Y)\,g((\nabla_V h_i)X,Z)  + \eta^i(Y)g((\nabla_V h_i)X, \widetilde QZ) +\eta^i(Z)g((\nabla_V h_i)X, \widetilde QY) \notag \\ 
 & +g(h_iV,Y)g(h_iX,\widetilde Q Z)+ g(h_iV,Z)g(h_iX,\widetilde Q Y)\big]+2 \sum\nolimits_{i,j} \eta^i(Y) \eta^j(Z) g(h_iX, \widetilde Q h_jV).   
\end{align}
Using \eqref{2.1}, \eqref{EF-nS-01} and  \eqref{E-3.11}, we find $\nabla^2$-terms in \eqref{E-3.32}:
\begin{align}\label{ER-nS-04aa}
\nonumber
 & g((\nabla^2_{V,X}\,{f}){f} Z, Y) = g((\nabla^2_{V, {f} Z}\,{f})Y, X)
  = -g(R_{X,Y}V, Q Z) +\sum\nolimits_{i}\eta^i(Z)\,g(R_{X,Y}V, \xi_i) \\
 &\qquad -\tfrac12\,g(R_{X,V}{f} Z, {f} Y) - \tfrac12\,g(R_{V,Y}{f} Z, {f} X) - \tfrac12\,g(R_{X,Y}{f} Z, {f} V),\\
 \label{ER-nS-04a}
\nonumber
 & g((\nabla^2_{V,X}\,{f}){f} Y, Z) = g((\nabla^2_{V, {f} Y}\,{f})Z, X)
  = -g(R_{X,Z}V, Q Y) +\sum\nolimits_{i}\eta^i(Y)\,g(R_{X,Z}V, \xi_i) \\
 &\qquad -\tfrac12\,g(R_{X,V}{f} Y, {f} Z) - \tfrac12\,g(R_{V,Z}{f} Y, {f} X) - \tfrac12\,g(R_{X,Z}{f} Y, {f} V).
\end{align}
Substituting  \eqref{ER-nS-04aa}--\eqref{ER-nS-04a} in  \eqref{E-3.32}, we get
\begin{align}\label{ER-nS-03b}
\nonumber
 & g((\nabla_V\,{f})Y, (\nabla_X {f})Z) + g((\nabla_X {f})Y, (\nabla_V\,{f})Z) = -g(R_{X,Z}V, Q Y) - g(R_{X,Y}V, Q Z) \\
\nonumber
 & - \tfrac12\,g(R_{V,Z}{f} Y, {f} X) - \tfrac12\,g(R_{X,Z}{f} Y, {f} V) - \tfrac12\,g(R_{V,Y}{f} Z, {f} X) - \tfrac12\,g(R_{X,Y}{f} Z, {f} V)\\
\nonumber
 &   \nonumber
 + \sum\nolimits_{i}\big[\eta^i(Y)\,g(R_{X,Z}V, \xi_i) 
 + \eta^i(Z)\,g(R_{X,Y}V, \xi_i) - g(h_iV,Z)\,g(h_iX,Y) - \eta^i(Z)\,g((\nabla_V h_i)X,Y)\\
 &  - g(h_i V,Y)\,g(h_i X,Z) - \eta^i(Y)\,g((\nabla_V h_i)X,Z) - \eta^i(Y)g((\nabla_V h_i)X, \widetilde QZ) -\eta^i(Z)g((\nabla_V h_i)X, \widetilde QY) \notag \\ 
  & -g(h_iV,Y)g(h_iX,\widetilde Q Z)- g(h_iV,Z)g(h_iX,\widetilde Q Y)\big]+2 \sum\nolimits_{i,j} \eta^i(Y) \eta^j(Z) g(h_iX, \widetilde Q h_jV). 
\end{align}
Applying \eqref{E-3.6} twice followed by \eqref{E-3.24} in \eqref{ER-nS-03b}, the curvature terms involving $\xi_i$ vanish, and we~get
\begin{align}\label{E-3.34}
\nonumber
 & g((\nabla_X {f})Z, (\nabla_V {f})Y) + g((\nabla_X {f})Y, (\nabla_V {f})Z) + g(R_{X,Z}V, Q Y) + g(R_{X,Y}V, Q Z)  \\
\nonumber
 & - g(R_{V,Z}{f} X, {f} Y) - g(R_{X,Z}{f} V, {f} Y) 
 + \sum\nolimits_{i} \big[g(h_i V,Y)\,g(h_i X,Z) + g(h_iV,Z)\,g(h_iX,Y)\big] \\
 = &-\tfrac14\delta(X,Z,V,Y) {-} \tfrac14\delta(V,Z,X,Y) {-}\sum\nolimits_{i} [g(h_iV,Y)g(h_iX,\widetilde Q Z)+ g(h_iV,Z)g(h_iX,\widetilde Q Y)\big] \notag \\ 
 & + \sum\nolimits_{i,j} \big[2\eta^i(Y) \eta^j(Z) g(h_iX, \widetilde Q h_jV){-}\eta^i(Y)\eta^j(X)g(h_ih_jV, \widetilde QZ) {-}\eta^i(Z)\eta^j(X)g(h_ih_jV, \widetilde QY)\big]. 
\end{align}
Replacing  $Z$ by ${f} Z$ and $V$ by ${f} V$ in \eqref{E-3.34}, we find
\begin{align}\label{E-3.35}
\nonumber
 & g((\nabla_X {f}){f} Z, (\nabla_{{f} V}\,{f})Y) 
 + g((\nabla_X {f})Y, (\nabla_{{f} V}\,{f}){f} Z)
 + g(R_{X,{f} Z}{f} V, Q Y) {-} g(R_{X,{f} Z}{f}^2 V, {f} Y) \\
\nonumber
 & + g(R_{X,Y}{f} V,{f} Q Z) - g(R_{{f} V,{f} Z}{f} X,{f} Y) +\sum\nolimits_{i}\big[ g(h_iX,Y)\,g(h_i{f} V,{f} Z)+ g(h_i X,{f} Z)\,g(h_i{f} V,Y)\big]  \\
 & =- \tfrac14\,\delta(X,{f} Z,{f} V,Y) - \tfrac14\,\delta({f} V,{f} Z,X,Y) -\sum\nolimits_{i,j} \eta^i(Y)\eta^j(X)g(h_ih_jfV, \widetilde QfZ)  \notag \\ 
 & -\sum\nolimits_{i} [g(h_ifV,Y)g(h_iX,\widetilde Qf Z)+ g(h_ifV,fZ)g(h_iX,\widetilde Q Y)\big].
\end{align}
Using \eqref{2.1}, \eqref{E-3.29}, \eqref{E-3.30}, \eqref{E-3.31} and Lemma~\ref{L-nS-02}, we find
\begin{align}\label{E-3.36}
\nonumber
&g((\nabla_X {f}){f} Z, (\nabla_{{f} V}\,{f})Y) = g(Q(\nabla_X {f})Z, (\nabla_{V}\,{f})Y) -\sum\nolimits_{i,j} \eta^i(Z)\,\eta^j(V) g(Qh_i X,Q h_j Y) \\
\nonumber
&\quad +\sum\nolimits_{i} \big[ \eta^i(Z) g(Q{f} h_i X, (\nabla_V\,{f})Y)  - g(X, {f} h_i Z)\,g(V, {f} h_i Y) - \eta^i(V)\,g((\nabla_X {f})Z, Q{f} h_i Y)
\\ &\quad   + g(h_i X, Q Z) g(h_i V, Q Y)\big],\\ 
\label{E-3.37}
\nonumber
& g((\nabla_X {f})Y, (\nabla_{{f} V}{f}){f} Z) = -g(Q(\nabla_X {f})Y, (\nabla_{V}\,{f})Z) +\sum\nolimits_{i} \big[\eta^i(V) g(Q{f} h_i Z, (\nabla_{X} {f})Y) \\
&\quad  -\eta^i(Z) g(Q{f} h_i V, (\nabla_{X} {f})Y)
 +g({f} h_i X, Y) g({f} h_i V, \widetilde Q Z )\big].
\end{align} 
From  \eqref{E-3.23b} and Lemma~\ref{L-nS-02}, we have
\begin{align}\label{E-3.38}
\nonumber
 g(R_{X, {f} Z}{f} V, Y) - g(R_{X, {f} Z}{f}^2 V, {f} Y) &= g(R_{X, {f} Z}{f} V, Y) +g(R_{X, {f} Z} Q V, {f} Y) \\
 & \quad  -\sum\nolimits_{i,j}\eta^i(X)\,\eta^j(V)\, 
 g(h_jh_i Z, QY).
\end{align} 
On the other hand, from \eqref{E-3.4}, \eqref{E-3.6} and \eqref{2.1} it follows that
\begin{align}
\label{E-3.39}
& g(R_{X, Z}{f} V, {f} Y) + g(R_{X, Z}{f}^2 V, Y) + g(R_{X, {f} Z}{f} V, Y) + g(R_{{f} X, Z}{f} V, Y) = 0 ,\\
\label{E-3.40}
\nonumber
-&g(R_{{f} X,Z}{f} V, Q Y) +\sum\nolimits_{i}\eta^i(Y)\,g(R_{{f} X, Z}{f} V,\xi_i) =-g(R_{Q X,{f} Z}V,{f} Y) \\
& \quad +\sum\nolimits_{i}\eta^i(X)\,g(R_{\xi_i\,,{f} Z}V,{f} Y) +\tfrac12\,\delta({f} X,Z,V,{f} Y).
\end{align} 
Summing up the formulas \eqref{E-3.39} and \eqref{E-3.40}
(and using \eqref{2.1}, \eqref{E-3.23b} and Lemma~\ref{L-nS-02}), we obtain
\begin{align}\label{E-3.41}
\nonumber
 g(R_{X, {f} Z}{f} V, Y) &+ g(R_{Q X,{f} Z}V,{f} Y) = g(R_{X, Z} Q V, Y) - g(R_{X, Z}{f} V, {f} Y) 
 + g(R_{{f} X,Z}{f} V, \widetilde Q Y) \\
\nonumber
 &+\sum\nolimits_{i,j}\big[ \eta^i(X)\,\eta^j(V)\,g(h_ih_j Y, \widetilde Q Z)-\eta^i(Y)\,\eta^j(Z)\,g(h_ih_j V, Q X) \\
 & + \eta^i(Z)\,\eta^j(V)\,g(h_ih_j Y, X)\big]
  + \tfrac12\,\delta({f} X,Z,V,{f} Y).
\end{align}
 Substituting \eqref{E-3.41} into \eqref{E-3.38}, we get
\begin{align}\label{E-3.42}
\nonumber
 & g(R_{X, {f} Z}\,{f} V, Y) - g(R_{X, {f} Z}\,{f}^2 V, {f} Y) = g(R_{X, Z}\,Q V, Y) - g(R_{X, Z}{f} V, {f} Y) \\
\nonumber
 & + \sum\nolimits_{i,j=1}^s\big[\eta^i(Z)\eta^j(V)\,g(h_ih_jY, X)  - \eta^i(Y)\eta^j(Z)\,g(h_jh_i X, Q V)  - \eta^i(X)\,\eta^j(V)\,g(h_jh_iZ, Y)\big] \\
 & - g(R_{\widetilde Q X, {f} Z} V, {f} Y) + g(R_{X, {f} Z} \widetilde Q V, {f} Y) + g(R_{{f} X, Z}{f} V, \widetilde Q Y) +\tfrac12\,\delta({f} X,Z,V,{f} Y) .
\end{align} 
Using \eqref{E-3.23b} and \eqref{E-3.6}--\eqref{E-3.5}, we have
\begin{align}\label{E-3.43}
\nonumber
 & g(R_{X, Y}{f} V, {f} Z) - g(R_{{f} V, {f} Z}{f} X, {f} Y) =  g(R_{V, Z}{f} X,{f} Y)  -g(R_{V, Z}Q X, Q Y) \\
\nonumber
 & +\sum\nolimits_{i,j}\big[\eta^i(X)\,\eta^j(V)\,g(h_ih_j Q Y, Z)-\eta^i(X)\eta^j(Z)\,g(h_ih_j Q Y, V) +\,\eta^i(Y)\,\eta^j(Z)\,g(h_ih_j Q X, V) \notag  \\
 & -\eta^i(Y)\,\eta^j(V)\,g(h_ih_j Q X, Z)\big]
 -\tfrac12\,\delta({f} X,{f} Y,Z,V) - \tfrac12\,\delta(X,Y,Z,V) .
\end{align} 
Applying \eqref{E-3.36}, \eqref{E-3.37}, \eqref{E-3.42} and \eqref{E-3.43} in \eqref{E-3.35},
and using Lemma~\ref{L-nS-02}, we obtain
\begin{align}\label{E-3.44}
\nonumber
 & g(Q(\nabla_X {f})Z, (\nabla_{V} {f})Y) - g(Q(\nabla_X {f})Y, (\nabla_{V} {f})Z) + \sum\nolimits_{i}\big[\eta^i(V) g((\nabla_{X} {f})Y, Q{f} h_i Z)   \\
\nonumber
 & - \eta^i(V)\,g((\nabla_X {f})Z, Q{f} h_i Y)
 +\eta^i(Z) g((\nabla_V {f})Y, Q{f} h_i X)  - \eta^i(Z) g((\nabla_{X} {f})Y, Q{f} h_i V) \\
\nonumber
 & + g(h_i X, Q Z) g(h_i V, Q Y) - g(h_iX,Y)\,g(h_i V, Q Z) \big]  + \sum\nolimits_{i,j}\big[ 2\,\eta^i(Z)\eta^j(V) g(h_ih_jY,X)\\
\nonumber
 & - \eta^i(X)\,\eta^j(Z)\,g(h_ih_j Y, QV) - \eta^i(Y)\,\eta^j(V)\,g(h_ih_j X, QZ)\big ] + g(R_{X, Z}\,Q V, Y) - g(R_{X, Z}{f} V, {f} Y) \\
\nonumber
 &  
 + g(R_{V, Z}{f} X,{f} Y) 
 - g(R_{V, Z}\,Q X, Q Y) = g(R_{\widetilde Q X, {f} Z} V, {f} Y) - g(R_{X, {f} Z} \widetilde Q V, {f} Y) \\
\nonumber
&- g(R_{X,{f} Z}{f} V, \widetilde Q Y)
 - g(R_{X,Y}{f} V, {f} \widetilde Q Z) - g(R_{{f} X, Z}{f} V, \widetilde Q Y)-\sum\nolimits_{i} [g({f} h_i X, Y) g({f} h_i Z, \widetilde Q V) \\ 
&+ g(h_ifV,Y)g(h_iX,\widetilde Qf Z)+ g(h_ifV,fZ)g(h_iX,\widetilde Q Y)\big] + \sum\nolimits_{i,j}^s [ \eta^i(Z)\eta^j(V)g(\widetilde Q(Q{+} {\rm Id})h_iX,h_jY)\notag  \\
\nonumber
&  -\eta^i(X)\,\eta^j(V)\,g(h_ih_j Y, \widetilde QZ) - \eta^i(Y)\eta^j(X)g(h_ih_jfV, \widetilde QfZ) ] 
+ \tfrac12\,\delta({f} X,{f} Y,Z,V) \notag \\  
& - \tfrac12\,\delta({f} X,Z,V,{f} Y) + \tfrac12\,\delta(X,Y,Z,V)
-\tfrac14\,\delta(X,{f} Z,{f} V,Y) - \tfrac14\,\delta({f} V,{f} Z,X,Y).
\end{align}
Adding \eqref{E-3.44} to \eqref{E-3.34}, we obtain
\begin{align}\label{E-3.45}
\nonumber
 & 2\,g((\nabla_X {f})Z, (\nabla_V {f})Y) + g(\widetilde Q(\nabla_X {f})Z, (\nabla_{V} {f})Y)
 - g(\widetilde Q(\nabla_X {f})Y, (\nabla_{V} {f})Z) \\
\nonumber
 & + \sum\nolimits_{i}\big[\eta^i(V) g((\nabla_{X} {f})Y, Q{f} h_i Z) - \eta^i(V)\,g((\nabla_X {f})Z, Q{f} h_i Y) +\eta^i(Z) g((\nabla_V {f})Y, Q{f} h_i X) \\
\nonumber
 & - \eta^i(Z) g((\nabla_{X} {f})Y, Q{f} h_i V) + g(h_i X, Q Z) g(h_i V, Q Y) + g(h_i V,Y)\,g(h_i X,Z)\big] \\
\nonumber
 & + \sum\nolimits_{i,j}^s \big[2\,\eta^i(Z)\eta^j(V) g(h_ih_j Y,X) - \eta^i(X)\eta^j(Z)g(h_ih_j Y, QV) - \eta^i(Y)\eta^j(V)g(h_ih_j X, QZ)\big] \\
\nonumber
 & + 2\,g(R_{X, Z}\,V, Y) - 2\,g(R_{X,Z}{f} V, {f} Y) - g(R_{V, Z}\,Q X, Q Y) + g(R_{X,Y}V, Q Z) \\
\nonumber
& = g(R_{\widetilde Q X, {f} Z} V, {f} Y) {-} g(R_{X, {f} Z} \widetilde Q V, {f} Y) {-} g(R_{X,{f} Z}{f} V, \widetilde Q Y) - g(R_{X, Z}\,\widetilde Q V, Y) -g(R_{X,Z}V, \widetilde Q Y)\\
\nonumber
& -g(R_{X,Y}{f} V, {f}\widetilde Q Z) -g(R_{{f} X, Z}{f} V,\widetilde Q Y) -\sum\nolimits_{i,j}  [\eta^i(X)\,\eta^j(V)\,g(h_ih_j Y, \widetilde QZ) \notag\\ 
\notag& 
- \eta^i(Z)\eta^j(V)g(\widetilde Q(Q{+} {\rm Id})h_iX,h_jY)-2\eta^i(Y) \eta^j(Z) g(h_iX, \widetilde Q h_jV)+\eta^i(Y)\eta^j(X)g(h_ih_jV, \widetilde QZ) \\ 
& +\eta^i(Z)\eta^j(X)g(h_ih_jV, \widetilde QY) +\eta^i(Y)\eta^j(X)g(h_ih_jfV, \widetilde QfZ)] +\sum\nolimits_{i}\big[g(h_iX,Y)\,g(h_i V,\widetilde Q Z) \notag\\
 \nonumber 
 &  - g({f} h_i X, Y) g({f} h_i Z, \widetilde Q V) {-}g(h_ifV,Y)g(h_iX,\widetilde Qf Z){-} g(h_ifV,fZ)g(h_iX,\widetilde Q Y) {-}g(h_iV,Y)g(h_iX,\widetilde Q Z)\\ 
 &- g(h_iV,Z)g(h_iX,\widetilde Q Y) ]  -\tfrac12\,\delta({f} X,Z,V,{f} Y)   + \tfrac12\,\delta({f} X,{f} Y,Z,V) 
 +\tfrac12\,\delta(X,Y,Z,V)   \nonumber\\ 
& -\tfrac14\delta(X,Z,V,Y) - \tfrac14\delta(V,Z,X,Y) 
-\tfrac14\,\delta(X,{f} Z,{f} V,Y) - \tfrac14\,\delta({f} V,{f} Z,X,Y).
\end{align}
Swapping $X\leftrightarrow Z$ and $V\leftrightarrow Y$ in \eqref{E-3.45},
then subtracting the gotten equation from \eqref{E-3.45} and using \eqref{E-nS-01b}, we~get
\begin{align}\label{E-3.46}
\nonumber
 & \sum\nolimits_{i}\big[\eta^i(V) g((\nabla_{X} {f})Y, Q{f} h_i Z) - \eta^i(V)\,g((\nabla_X {f})Z, Q{f} h_i Y) +\eta^i(Z) g((\nabla_V\,{f})Y, Q{f} h_i X) \\
\nonumber
 & -\eta^i(Y)\,g((\nabla_Z\,{f})V, Q{f} h_i X)- \eta^i(Z) g((\nabla_{X} {f})Y, Q{f} h_i V) + \eta^i(Y) g((\nabla_{Z} {f})X, Q{f} h_i V)  \\
\nonumber
 & -\eta^i(X) g((\nabla_Y\,{f})V, Q{f} h_i Z) + \eta^i(X) g((\nabla_{Z} {f})V, Q{f} h_i Y)\big]  + g(R_{Z,V}\,Q X, Q Y) + g(R_{Z,V}\,Q X, Y) \\
\nonumber
 & - g(R_{X,Y}\,Q Z, V) {-} g(R_{X,Y}\,Q Z, Q V) {+}2\sum\nolimits_{i,j}\big[ \eta^i(Z)\eta^j(V) g(h_ih_jY, X) 
 {-}\eta^i(X)\eta^j(Y) g(h_ih_j V,Z) ] \\
\nonumber
& = g(R_{\widetilde Q X, {f} Z} V, {f} Y) {-} g(R_{X,{f} Z}{f} V, \widetilde Q Y) + g(R_{Z,V}{f} Y, {f}\widetilde Q X)  
{-} g(R_{X,Y}{f} V, {f}\widetilde Q Z) {-} g(R_{\widetilde Q Z, {f} X}Y, {f}V)\\
 \nonumber
& + g(R_{Z,{f} X}{f} Y, \widetilde Q V)  + \sum\nolimits_{i,j}\big[ \eta^i(X)\,\eta^j(V)\,g(h_ih_j Z, \widetilde QY)-\eta^i(Z)\,\eta^j(Y)\,g(h_ih_j V, \widetilde QX) \big] \\
 &  + \sum\nolimits_{i}\big[g(h_iX,Y)\,g(h_i V,\widetilde Q Z)- g({f} h_i X, Y) g({f} h_i Z, \widetilde Q V) - g(h_iV,QZ)g(h_iX,\widetilde Q Y)\nonumber\\ 
 &  - g(h_iV,Z)g(h_iX,\widetilde Q Y) {-}g(h_iZ,V)\,g(h_i Y,\widetilde Q X) {+}g({f} h_i Z, V) g({f} h_i X, \widetilde Q Y) {+}g(h_iY,QX)g(h_iZ,\widetilde Q V) \notag  \\ 
 &- g(h_iY,X)g(h_iZ,\widetilde Q V) \big]
  +\tfrac14\delta({f} X,Z,{f} Y,V) + \tfrac14\,\delta({f} X,{f} Y,Z,V) +\tfrac12\,\delta(X,Y,Z,V)\notag \\
 & +\tfrac14\,\delta(X,{f} Z,{f} V,Y) -\tfrac14\,\delta({f} Z,{f} V, X,Y) .
\end{align}
\textbf{Part 2}.
Inserting $\xi_k$ in place of $V$ in \eqref{E-3.46},
then using Lemma~\ref{L-nS-02}, \eqref{E-nS-04cc}, Remark~\ref{Rem-delta} and \eqref{E-nS-01b}, we get
\begin{align*} 
 & g((\nabla_{X} {f})Y, Q{f} h_k Z) = g((\nabla_X {f})Z,Q{f} h_k Y) {-} \sum\nolimits_{i}\big[\eta^i(Z)g(Qh_kY,Qh_iX) 
 {+}\eta^i(Y)g(Qh_kZ,Qh_iX) \\ 
 & +2\eta^i(Z)g(h_kY,h_iX)  {-}\eta^i(X)g(h_ih_kY,\widetilde QZ) \big] 
 {+}g(R_{\xi_k, Z} QX, QY) {+} g(R_{\xi_k, Z} QX, Y) {-}2g(R_{\xi_k, QZ} X,Y).
\end{align*}
Simplifying the above equation using \eqref{2.1} and \eqref{E-3.23b}, we have 
\begin{align}\label{E-3.47}
 g((\nabla_{X} {f})Y, Q{f} h_k Z) &= g((\nabla_X {f})Z,Q{f} h_k Y) {+} \sum\nolimits_{i}\big[\eta^i(Z)g(Qh_kY,Qh_iX) 
 {-}\eta^i(Y)g(Qh_kZ,Qh_iY) \notag \\ 
 & \quad+2\,\eta^i(Z)g(\widetilde Q(Q+ {\rm Id})X,h_ih_kY)   \big] .
\end{align}
By swapping $X$ and $Y$ in \eqref{E-3.47} and adding this to \eqref{E-3.47}, we get
\begin{align}\label{E-3.47b}
 & g((\nabla_Y f)Z,Qfh_kX) +g((\nabla_X f)Z,Qfh_kY)= \sum\nolimits_{i}\big[ \eta^i(X) g(Qh_iY,Qh_kZ) +\eta^i(Y)g(Qh_iX,Qh_kZ )   \notag \\ 
 & -2\eta^i(Z)g(Qh_i X,Qh_kY)+ 4\eta^i(X)g(\widetilde Q(Q+ {\rm Id})Y,h_ih_kZ)\big].
\end{align}
By swapping $Z$ and $X$ in \eqref{E-3.47b}, and adding this to \eqref{E-3.47}, we get
\begin{align*}
 &g((\nabla_X f)Y,Qfh_kZ) = \sum\nolimits_{i} \big[\eta^i(X)g(Qh_i Z,Qh_kY)-\eta^i(Z)g(Qh_kY,Qh_iX) \notag \\
 & \quad +2 \eta^i(X)g(\widetilde Q(Q+ {\rm Id})X,h_ih_kZ)-\eta^i(Z)g(\widetilde Q(Q+ {\rm Id})X,h_ih_kY) \big].
\end{align*}
Finally, simplifying the above equation using the definition of $\widetilde Q$, we get the desired \eqref{E-3.50}.
\end{proof}


Next, we generalize Proposition 3.2 in \cite{NDY-2018} to the case $s>1$.


\begin{Lemma}\label{T-new}
Let $\Omega^k(M)$ be the space of 
$k$-forms on an $s$-contact manifold 
$(M^{2n+s},\vec{\eta},{\mathcal R}\oplus{\cal D})$.
If~$n\ge 3$, then the map
 $\mathcal{E}_{d\eta^i}:\ \Omega^2(M)\longrightarrow\Omega^4(M),\, \beta\mapsto d\eta^i \wedge\beta$ is injective for all $i=1,\ldots,s$.
\end{Lemma}

\begin{proof} We denote by $\Omega^k(\mathcal D)$
the space of $k$-forms along $\mathcal D$, i.e., $k$-forms vanishing whenever at least one argument lies in~${\mathcal R}$. The proof of the proposition consists of four steps.

\smallskip\noindent
\underline{Step 1} (Symplecticity on $\mathcal D$ and Lefschetz injectivity):
By the definition of an $s$-contact manifold, $d\eta^i|_{\mathcal D}$ is nondegenerate at every point $p\in M$. 
Each fibre $(\mathcal D_p, d\eta^i|_{{\mathcal D}_p})$
is a $2n$-dimensional symplectic vector space, hence $(d\eta^i|_{{\mathcal D}})^n$ is nowhere vanishing. 
In particular, by \cite[Proposition~1.1]{BPD-2003} (the algebraic Lefschetz property), the linear maps
$\mathcal E_{d\eta^i|_{\mathcal D}}:\,\Omega^2(\mathcal D)\longrightarrow\Omega^4(\mathcal D)$,
$\mathcal E_{d\eta^i|_{{\mathcal D}}}:\,\Omega^1(\mathcal D)\longrightarrow\Omega^3(\mathcal D)$
are injective for $n\ge 3$ and $n\ge 2$, respectively (the latter injectivity, in particular, holds when $n\ge3$).

\smallskip\noindent
\underline{Step 2} (Bundle-level decomposition by $\eta$-degree):
At each $p\in M$ we have a direct-sum decomposition
$T_p^*M={\mathrm{span}\{\eta^1_p,\dots,\eta^s_p\}}\ \oplus\ \mathcal D_p^*,$
which induces a decomposition of exterior powers:
$\Lambda^k(T_p^*M)\;=\;\bigoplus\nolimits_{j=0}^{\min(k,s)} \Lambda^j (\mathrm{span}\{\eta^1_p,\dots,\eta^s_p\}) \,\wedge\, \Lambda^{k-j}\mathcal D_p^*$.
Since the ranks are constant and the summands depend smoothly on~$p$, this yields a decomposition of bundles and spaces of sections:
\[
\Omega^k(M) \;=\; \bigoplus\nolimits_{j=0}^{\min(k,s)} {\Lambda^j\!\big(\mathrm{span}\{\eta^1,\dots,\eta^s\}\big)}
\wedge \Omega^{k-j}(\mathcal D).
\]
In particular, every $\beta\in\Omega^2(M)$ admits a unique splitting
\begin{equation}\label{eq:beta-splitting}
\beta \;=\; \beta_0 \;+\; \sum\nolimits_{a=1}^s \eta^a\wedge\mu_a \;+\; \sum\nolimits_{1\le a<b\le s} f_{ab}\,\eta^a\wedge\eta^b,
\end{equation}
with $\beta_0\in\Omega^2(\mathcal D)$, $\mu_a\in\Omega^1(\mathcal D)$ and $f_{ab}\in C^\infty(M)$.

\smallskip\noindent
\underline{Step 3} ($d\eta^i$ has no $\eta$-components):
Expand $d\eta^i$ according to the same decomposition:
\[
d\eta^i \;=\; 
d\eta^i|_{\mathcal D}
+ \sum\nolimits_{a=1}^s \eta^a\wedge \gamma_a \;+\; \sum\nolimits_{1\le a<b\le s} h_{ab}\,\eta^a\wedge\eta^b,
\]
with $\gamma_a\in\Omega^1(\mathcal D)$ and $h_{ab}\in C^\infty(M)$. Let $\{\xi_1,\dots,\xi_s\}$ be a local frame  of ${\mathcal R}$ dual to $\{\eta^1,\dots,\eta^s\}$. Using $\ker\,d\eta^i={\mathcal R}$, we have $\iota_{\xi_a}d\eta^i=0$ for each $i$ and $a$. Contracting the above expansion yields
\[
0=\iota_{\xi_a}d\eta^i
=\ \gamma_a \;+\; \sum\nolimits_{b} h_{ab}\,\eta^b \;-\; \sum\nolimits_{b} h_{ba}\,\eta^b
=\ \gamma_a \;+\; 2\sum\nolimits_{b} h_{ab}\,\eta^b .
\]
Since $\{\eta^b\}$ are pointwise independent, it follows that $\gamma_a=0$ and $h_{ab}=0$ for all $a,b$. 
Hence
$d\eta^i = d\eta^i|_{\mathcal D}$,
that means that $d\eta^i$ has no terms involving $\eta^i$.

\smallskip\noindent
\underline{Step 4} (Wedge and compare $\eta$-degrees):
Assume $d\eta^i\wedge\beta=0$. Using \eqref{eq:beta-splitting}, 
\begin{align}\label{eq-beta-deta}
0 = d\eta^i|_{\mathcal D} \wedge\beta
&= {
d\eta^i|_{\mathcal D}\wedge \beta_0}
+\sum\nolimits_{a=1}^s {\eta^a\wedge\bigl(
d\eta^i|_{\mathcal D}\wedge \mu_a\bigr)}
+\big(\sum\nolimits_{1\le a<b\le s} {f_{ab}\,\eta^a\wedge\eta^b\big)\wedge d\eta^i|_{\mathcal D}}.
\end{align}
Since different $\eta$-degrees lie in a direct sum, each component vanishes separately:

\smallskip
$\bullet$ Zero $\eta$-part (the first term of \eqref{eq-beta-deta}): $d\eta^i|_{\mathcal D}\wedge\beta_0=0$ in $\Omega^4({\mathcal D})$. 
Now, as $d\eta^i|_{\mathcal D}\in\Omega^2({\mathcal D})$ and by the  decomposition \eqref{eq:beta-splitting}, 
$\beta_0 \in \Omega^2({\mathcal D})$ and since 
$\mathcal E_{d\eta^i|_{\mathcal D}}:\Omega^2({\mathcal D})\to\Omega^4({\mathcal D})$ is injective for $n\ge3$ (by \cite[Proposition~1.1]{BPD-2003}), we get $\beta_0=0$.

\smallskip

$\bullet$ {One $\eta$-part}: For each $a$, 
$d\eta^i|_{\mathcal D}\wedge \mu_a=0$ in $\Omega^3({\mathcal D})$. 
Again, since $\mathcal E_{d\eta^i|_{\mathcal D}}: \Omega^1({\mathcal D})\to\Omega^3({\mathcal D})$ is injective 
(in particular, for $n\ge3$), $\mu_a=0$ for all $a$.

\smallskip

$\bullet$ {Two $\eta$-parts}:
From Step~4 we are left with
$\big(\sum\nolimits_{1\le a<b\le s} f_{ab}\,\eta^a\wedge\eta^b\big)
\wedge d\eta^i|_{\mathcal D} = 0$.
Wedge this identity with $(d\eta^i|_{\mathcal D})^{\,n-1}$ to obtain
$\big(\sum\nolimits_{a<b} f_{ab}\,\eta^a\wedge\eta^b\big) \wedge (d\eta^i|_{\mathcal D})^{\,n} = 0$.
By Step~1, $(d\eta^i|_{\mathcal D})^{\,n}$ is a nowhere-vanishing horizontal top form on ${\mathcal D}$.
Fix any point $p\in M$.  
Then $\{\,\eta^a_p\wedge\eta^b_p\,\}_{a<b}$ is a basis of
$\Lambda^2(\mathrm{span}\{\eta^1_p,\dots,\eta^s_p\})$, and wedging with
$(d\eta^i|_{\mathcal D})^{\,n}\ne0$ 
multiplies these basis elements by the same nonzero
horizontal volume form. 
Hence, $\{\,\eta^a_p\wedge\eta^b_p\,\wedge (d\eta^i|_{\mathcal D})^{n} \}_{a<b}$ are linearly independent $(2n+2)$-forms on $M^{2n+s}$ where $s>1$. Applying this and the equality above forces $f_{ab}(p)=0$ for $a<b$. Since $p\in M$ is arbitrary, $f_{ab}\equiv0$ on $M$. Thus, 
$\beta=0$,
and $\mathcal E_{d\eta^i}$ is injective on $\Omega^2(M)$ for $n\ge 3$.
\end{proof}

\section*{Conclusion}

We have shown that the weak nearly ${\cal C}$-structure is useful for studying metric $f$-structures, almost contact metric structures, and Killing vector fields.
Some results on nearly cosymplectic manifolds and weak nearly cosymplectic manifolds (see \cite{E-2005,NDY-2018,rov-128}) were generalized to weak nearly ${\cal C}$-manifolds satisfying \eqref{E-nS-10} and \eqref{E-nS-04c}. 
Namely, the splitting Theorem~\ref{Th-4.5} was proven and the $(4+s)$-dimensional case was characterized in Theorem~\ref{T-4.4}.
Their consequences (without the requirement \eqref{E-nS-10} and \eqref{E-nS-04c}) present new results for nearly ${\cal C}$-manifolds.
Based on the numerous applications of the metric $f$-structure, we expect that the weak nearly ${\cal C}$-structure will also be useful for differential geometry 
(e.g., the theory of $\mathfrak{g}$-foliations, $s$-cosymplectic structures and $s$-contact structures) and theoretical~physics.

\section*{Statements and Declarations}

\begin{itemize}
\item\vskip-2.5mm \textbf{Funding}: 
Sourav Nayak is financially supported by a UGC research fellowship (Grant No. 211610029330). Dhriti Sundar Patra would like to thank the Science and Engineering Research Board (SERB), India, for financial support through the Start-up Research Grant (SRG) (Grant No. SRG/2023/002264).

\item\vskip-2.5mm \textbf{Conflict of interest/Competing interests}: The authors have no conflict of interest or financial interests for this article.

\item\vskip-2.5mm \textbf{Ethics approval}: The submitted work is original and has not been submitted to more than one journal for simultaneous consideration.

\item\vskip-2.5mm \textbf{Availability of data and materials}: This manuscript has no associated data.

\item\vskip-2.5mm \textbf{Authors' contributions}: Conceptualization, methodology, investigation, validation, writing-original draft, review, editing, and reading have been performed by all authors of the paper.
\end{itemize}

\end{document}